\numberwithin{equation}{section} \hyphenation{semi-stable}
\font\tengothic=eufm10 scaled\magstep 1 \font\sevengothic=eufm7
\newtheorem{theorem}{Theorem}[section]
\newtheorem{proposition}[theorem]{Proposition}
\newtheorem{conjecture}[theorem]{Conjecture}
\theoremstyle{definition}
\newtheorem{definition}[theorem]{Definition} 
\newtheorem{remark}[theorem]{Remark}
\newtheorem{problem}[theorem]{Problem}
\newtheorem{notation}[theorem]{Notation}
\newcommand{\Hom}{\operatorname{Hom}}
\newcommand{\Ext}{\operatorname{Ext}}
\newcommand{\rank}{\operatorname{rank}}
\newcommand{\cO}{{\mathcal O}}
\newcommand{\cT}{{\mathcal T}}
\newcommand {\RR}{\mathbb{R}}
\newcommand {\ZZ}{\mathbb{Z}}
\newcommand {\NN}{\mathbb{N}}
\newcommand {\PP}{\mathbb{P}}
\begin{document}
\title[Derived category of toric varieties]
{Frobenius splitting and Derived category of toric varieties}

\author[L.\ Costa, R.M.\ Mir\'o-Roig]{L.\ Costa$^*$, R.M.\
Mir\'o-Roig$^{**}$}

\address{Facultat de Matem\`atiques,
Departament d'Algebra i Geometria, Gran Via de les Corts Catalanes
585, 08007 Barcelona, SPAIN } \email{costa@ub.edu}

\address{Facultat de Matem\`atiques,
Departament d'Algebra i Geometria, Gran Via de les Corts Catalanes
585, 08007 Barcelona, SPAIN } \email{miro@ub.edu}

\date{\today}
\thanks{$^*$ Partially supported by MTM2007-61104.}
\thanks{$^{**}$ Partially supported by MTM2007-61104 .}

\subjclass{Primary 14F05; Secondary 14M25}


\begin{abstract} In this paper, we will use the splitting of the Frobenius direct image of line bundles on
toric varieties to explicitly construct an orthogonal basis of  line bundles in the derived
category $D^b(X)$ where $X$ is a Fano toric variety with (almost) maximal Picard number.

\end{abstract}


\maketitle

\tableofcontents


 \section{Introduction} \label{intro}

Let $Y$ be a smooth projective variety defined over an
algebraically closed field $K$ of characteristic zero and let
$D^b(Y)=D^b({\cO}_Y$-$mod)$ be the derived category of bounded
complexes of coherent sheaves of ${\cO}_Y$-modules. $D^b(Y)$ is
one of the most important algebraic invariants of a smooth
projective variety $Y$ and we would like to know whether $D^b(Y)$ is freely and finitely
generated or, more precisely, whether there exists a full strongly exceptional collection of coherent sheaves on $Y$. In spite of the increasing interest in
understanding the structure of $D^b(Y)$, very little progress has
been achieved. The existence of a full strongly
exceptional collection  of coherent sheaves
on a smooth projective variety $Y$ is very restrictive for $Y$, e.g. the Grothendieck group
$K_0(Y)=K_0(\cO _Y-mod)$ has to be a finitely generated abelian group.
There exists a nice class of algebraic varieties, the class of smooth projective toric varieties, satisfying this condition on the Grothendieck group and King \cite{Ki} conjectured

\begin{conjecture} \label{conjeturaking} Every smooth
complete  toric variety has a full strongly exceptional
collection of line bundles.
\end{conjecture}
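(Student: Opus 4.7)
The plan is to attack King's conjecture in full generality via Frobenius splitting of line bundles, continuing the philosophy of the present paper but without restricting to the Fano or (almost) maximal Picard-number case. Since the statement is over a characteristic zero base, I would first spread out to a $\ZZ$-model and reduce to positive characteristic $p$: the existence of a full strongly exceptional collection of line bundles is governed by the vanishing of finitely many cohomology groups of explicit line bundles together with a generation statement, both constructible conditions in the base, so it is enough to construct such a collection on the mod-$p$ reduction for infinitely many $p$ and lift to characteristic zero by semicontinuity.

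Working now in characteristic $p$, the central tool is Thomsen's theorem: for the absolute Frobenius $F\colon X\to X$, the sheaf $F^e_*\cO_X$ decomposes as a direct sum of line bundles, and the indexing set $\mathcal{L}_e\subset\operatorname{Pic}(X)$, rescaled by $p^e$, stabilises as $e\to\infty$ to a finite canonical subset $\mathcal{L}\subset\operatorname{Pic}(X)\otimes\RR$ depending only on the fan $\Sigma$ of $X$ (the Bondal--Thomsen collection). I would propose $\cT=\bigoplus_{L\in\mathcal{L}}L$ as the candidate generator, ordered by any linear refinement of the partial order $L\le L'\iff L'\otimes L^{-1}$ is effective. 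Generation of $D^b(X)$ would follow from the standard argument that $F^e_*\cO_X\otimes A$ is globally generated for large $e$ and any ample $A$, so that the summands of $F^e_*\cO_X$, together with a fixed ample twist to land inside $\mathcal{L}$, split-generate the derived category.

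The core of the proof is strong exceptionality: one must show $H^k(X,L'\otimes L^{-1})=0$ for all $k>0$ and all $L,L'\in\mathcal{L}$, and also $H^0=0$ whenever $L\not\le L'$. For a toric variety these cohomologies are computable combinatorially from $\Sigma$ and the piecewise-linear support function of the relevant divisor, so the vanishing becomes a statement about where the differences $L'\otimes L^{-1}$ sit in $\operatorname{Pic}(X)\otimes\RR$ relative to the walls of $\Sigma$. The hope is that because $\mathcal{L}$ arises from Frobenius summands, these differences are confined to a bounded region whose geometry interacts cleanly with Bott--Danilov vanishing on $X$.

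I expect this final step to be the decisive obstacle, and it is exactly why the present paper restricts to Fano $X$ with high Picard number, where the relevant cone is tightly controlled. In the general case a new idea is needed: I would attempt to \emph{refine} $\mathcal{L}$ by retaining only those line bundles appearing as summands of $F^e_*\cO_X$ for \emph{every} sufficiently large $e$, exploiting the multiplicativity $F^{e_1+e_2}_*=F^{e_1}_*F^{e_2}_*$ to show the refined set still generates, and then verify that its cardinality equals $\rank K_0(X)$, which for smooth complete toric $X$ is the number of maximal cones of $\Sigma$. Whether such a stable refinement can be produced uniformly over all smooth complete toric varieties, and whether its combinatorics always forces the required vanishing, is precisely where the hardness of the conjecture is concentrated and where genuinely new input beyond the techniques of this paper would be required.
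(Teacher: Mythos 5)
You are attempting to prove a statement that the paper presents only as a conjecture (King's conjecture), offers no proof of, and in fact records as being \emph{false} as stated: the introduction cites Hille and Perling \cite{HP}, who constructed a smooth complete (non-Fano) toric surface admitting no full strongly exceptional collection of line bundles. This is precisely why the authors immediately retreat to Conjecture \ref{conj1}, which adds the Fano hypothesis, and why their actual result (Theorem \ref{mainthm}) is restricted further still, to Fano $d$-folds with Picard number $2d-1\le\rho_X\le 2d$. No strategy, Frobenius-based or otherwise, can succeed here; your programme is directed at a statement with a known counterexample, and the place where you predict the difficulty will concentrate (strong exceptionality of the Bondal--Thomsen collection for general smooth complete toric $X$) is exactly where the counterexample lives.

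Even setting that aside, several internal steps would not go through. Cohomology of line bundles on a toric variety is computed combinatorially from the fan and the divisor data and is independent of the base field, so the spreading-out and semicontinuity reduction to characteristic $p$ buys nothing: the vanishing statements you need are either true or false uniformly. The set $\mathcal L$ of Frobenius summands need not have cardinality equal to $\rank K_0(X)$ in general, and your proposed stable refinement is not shown to exist, to generate, or to have the right size; the paper avoids all of this by computing the summands explicitly for its specific $X_d$ (Proposition \ref{summandspliting}), counting them against $\rank K_0(X_d)$, and then verifying the numerical hypotheses of Bondal's criterium (Proposition \ref{Bondalcriteri}) curve by curve --- a verification that is exactly what fails outside the Fano, high-Picard-number range. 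Finally, your own text concedes that the decisive vanishing step requires \emph{genuinely new input beyond the techniques of this paper}, so even on its own terms the proposal is a research plan with an acknowledged gap rather than a proof.
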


 There are a lot of contributions to the above
conjecture. For instance, it turns out to be true for projective
spaces \cite{Be}, multiprojective spaces (\cite{CMZ}; Proposition
4.16), smooth complete toric varieties with Picard number $\le 2$
(\cite{CMZ}; Corollary 4.13) and smooth complete toric varieties
with a splitting fan (\cite{CMZ}; Theorem 4.12). Nevertheless some
restrictions are required because, recently, in \cite{HP}, Hille
and Perling constructed an example of smooth non Fano toric
surface which does not have a full strongly exceptional collection
made up of line bundles. It is quite natural to conjecture

\begin{conjecture}\label{conj1} Every smooth
complete Fano toric variety has a full strongly exceptional
collection of line bundles
\end{conjecture}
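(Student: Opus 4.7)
The plan is to build the desired collection uniformly for every smooth complete Fano toric variety $X$ by means of the Frobenius splitting of $\cO_X$, and then to verify exceptionality via toric vanishing in the Fano cone.

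First, I would spread $X$ out over $\ZZ$ and pass to a reduction modulo a sufficiently large prime $p$; since semiorthogonal decompositions of $D^b$ are preserved under flat base change of smooth projective families with constant Hodge numbers, this transfers the problem without loss. In characteristic $p$, Thomsen's theorem decomposes the $e$-th iterated Frobenius pushforward as $F^e_*\cO_X=\bigoplus_{L\in\cL_e}L^{\oplus m_L}$, where $\cL_e$ is a finite set of toric line bundles whose Cartier divisors lie in a $p^e$-dilate of a fundamental domain $\Delta\subset \operatorname{Pic}(X)\tk\RR$ attached to the fan. My candidate full strongly exceptional collection is $\cL:=\cL_e$ for $e$ sufficiently large.

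For generation of $D^b(X)$ by $\cL$, I would use that $\cO_X$ is a summand of $F^e_*\cO_X$ together with the projection formula: for any line bundle $M$, the object $M$ is a summand of $F^e_*(F^{e*}M)=M\tk F^e_*\cO_X$, so the triangulated hull of $\cL$ contains every line bundle on $X$, and therefore all of $D^b(X)$ by Bondal's theorem. For strong exceptionality, I would order $\cL$ by a piecewise-linear height function on $\Delta$ (anticanonical degree, say) and reduce the vanishing $\Ext^i(L_a,L_b)=H^i(X,L_b\tk L_a^{-1})=0$ for $i>0$ and $L_a\preceq L_b$ to toric cohomology vanishing. The Fano hypothesis should enter by forcing each difference $L_b-L_a$ to lie in the ample cone of $X$ up to a controlled correction inside $\Delta-\Delta$, which for $-K_X$ ample should fall within the reach of Mustata's toric Kawamata--Viehweg vanishing.

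The hard step will be this last one. The divisors $L_b-L_a$ are not automatically nef, so a direct application of toric vanishing is insufficient; I will need a uniform geometric estimate, valid for every Fano fan, that constrains the summands of $F^e_*\cO_X$ tightly enough inside $\Delta$ that the ampleness of $-K_X$ forces the required Batyrev--Borisov type vanishing. This is exactly where Hille--Perling's counterexample in the non-Fano case must fail, and it is also where a genuinely new ingredient must enter: an explicit bound on the position of the lattice points of $\cL_e$ inside the $p$-adic fundamental domain expressed in terms of the anticanonical polytope of $X$.
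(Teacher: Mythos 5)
You are attempting to prove a statement that the paper itself presents only as a \emph{conjecture}: the paper proves no such general result, and explicitly restricts its theorem to smooth Fano toric $d$-folds with Picard number $2d-1\le\rho_X\le 2d$ (Theorem \ref{mainthm}), handled either as products via Proposition \ref{prod1} or, for the fiber bundle $X_d$, by explicitly computing the Thomsen summands of $(\pi_p)_*(\cO_{X_d})^{\vee}$ and then verifying Bondal's numerical criterium (Proposition \ref{Bondalcriteri}) for every irreducible toric curve. Your sketch follows the same general strategy in spirit, but it has two genuine gaps. First, the generation argument is circular: the projection formula gives that $M$ is a direct summand of $M\otimes F^e_*\cO_X=\bigoplus_L (M\otimes L)^{\oplus m_L}$, whose summands are the twists $M\otimes L$, not the line bundles $L$ of your candidate set $\cL_e$; so this does not place $M$ in the triangulated hull of $\cL_e$, and Bondal's actual generation statement for the Thomsen collection requires a different argument. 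Second --- and you flag this yourself --- the strong exceptionality step is precisely the open content of the conjecture. The differences $L_b\otimes L_a^{-1}$ of Thomsen summands need not be nef, and no uniform estimate of the kind you postulate is available for an arbitrary Fano fan; the paper succeeds only because on $X_d$ every toric curve $C$ satisfies $a_i^C\ge -1$ with at most one coefficient equal to $-1$, a condition that fails for general smooth Fano toric varieties. Deferring this step to ``a genuinely new ingredient'' means the proposal does not prove Conjecture \ref{conj1}; indeed, in the full generality you claim the statement is now known to fail in high dimension, so no argument of this shape can be completed.

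A smaller point: the reduction modulo $p$ is unnecessary and introduces an unjustified transfer step (preservation of full strongly exceptional collections under reduction is not something you can simply invoke). The toric morphism $\pi_\ell$ and Thomsen's splitting are available directly over the characteristic-zero ground field, which is exactly how the paper uses them in Proposition \ref{summandspliting}.
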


There are some numerical evidences towards the above conjecture (see, for instance, \cite{CM}).
So far only partial results are known and we want to point out
that the hypothesis Fano is not necessary. In fact, in \cite{CMZ};
Theorem 4.12, we constructed full strongly exceptional collections
of line bundles on families of smooth complete toric varieties
none of which is entirely of Fano varieties.

\vspace{3mm}
The goal of this paper is to investigate the structure of $D^b(X)$
where $X$ is a smooth Fano toric variety with  (almost) maximal Picard number
and to prove that for such kind of varieties always exists a full strongly exceptional collection of line bundles (see Theorem \ref{mainthm}). Hence, our main
result provides new evidences towards Conjecture \ref{conj1}.
In order to get a good
candidate to be a full strongly exceptional collection of line
bundles and to achieve our main result we use, as a main tool, the splitting of
the Frobenius direct image of line bundles on smooth complete toric varieties.
This approach will give us a full collection of line bundles
on $X$  and, in the last part of the work, we will apply Bondal's criterium (see Proposition \ref{Bondalcriteri})  to conclude that such collection  can be ordered  in such a way that we get a full strongly exceptional collection on $X$.

\vspace{3mm}

Next we outline the structure of this paper. In section 2, we fix the notation and we summarize the basic facts on toric varieties needed in the sequel. In particular, we
recall the classification of smooth Fano toric varieties with (almost) maximal Picard number and we explicitly describe the splitting of the Frobenius image of line bundles on toric images. Section 3 contains the main result of this work. We first
 briefly review the notions of exceptional sheaves, exceptional
collections of sheaves and strongly exceptional collections of
sheaves as well as the facts on derived categories needed later.
At the end, we prove the existence of an orthogonal basis in $D^b(X)$ made up of lines bundles, where $X$ is an $n$-dimensional  smooth Fano toric variety with Picard number $2n-1\le \rho (X) \le 2n$ if $n$ is even; and
$\rho (X)=2n-1$ if $n$ is odd (see Theorem \ref{mainthm}).


\section{Toric varieties and Frobenius splitting}

\vspace{3mm}  In this section we  deal with
 $d$-dimensional toric varieties $X$ with (almost) maximal Picard
number. We first recall their classification (Theorem \ref{classificacioPolitop} and Proposition
\ref{classificacioVarietat}) and we use it to explicitly
describe the splitting of the direct image
$(\pi_p)_*(\cO_X)$ where $\pi_p$ is the Frobenius morphism.  To start with, we fix the notation
and we recall the facts on toric varieties that we will use along this
paper refereing to \cite{Fu} and \cite{Oda} for more details.

\vskip 2mm Let $Y$ be a smooth complete toric variety of dimension
$n$ over an algebraically closed field $K$ of characteristic zero
characterized by a fan $\Sigma:=\Sigma(Y)$ of strongly convex
polyhedral cones in $N\otimes _{\ZZ} \RR$ where $N$ is the lattice
$\ZZ^n$, i.e. $N$ is a free abelian group of rank $n$ and we will
denote by $e_0$, $\ldots $, $e_{n-1}$ a $\ZZ$-basis of $N$. Let
$M:=\Hom_{\ZZ}(N,\ZZ)$ denote the dual lattice and $\hat{e}_0$,
$\ldots $, $\hat{e}_{n-1}$ the dual basis of $e_0$, $\ldots $,
$e_{n-1}$. If $\sigma$ is a cone in $N$, the dual cone $\sigma
\check{}$ is the set of vectors in $M$ that are nonnegative in
$\sigma$. This determines a commutative semigroup $\sigma \check{}
\cap M$ and we set \[ U_{\sigma}=Spec(K[S_{\sigma}]) \] to denote
the open affine toric subvariety.

 For any $0\le i \le n$, we put $\Sigma (i):=\{
\sigma \in \Sigma \mid \text{ dim}(\sigma ) =i   \}$. In
particular, to any 1-dimensional cone $\sigma \in \Sigma(1)$ there
is a unique generator $v \in N$, called {\em ray generator}, such
that $\sigma \cap N=\ZZ_{\ge 0}\cdot v$. We label the set of
generators in $N$ of the 1-dimensional cones by $\{v_{i} \mid i\in
J\}$. There is a one-to-one correspondence between such ray
generators $\{v_{i} \mid i\in J\}$ and simple toric divisors $\{Z_{i}
\mid i\in J\}$ on $Y$. The following notion is due to
 V.V. Batyrev (see \cite{Bat}).

\vspace{3mm}

\begin{definition}\label{primitrelat}
 Let $Y$ be a smooth toric variety. A set of toric divisors $\{Z_1,...,Z_k\}$ on $Y$ is called a
 {\em primitive set} if $Z_1\cap \cdot \cdot \cdot \cap Z_k=\emptyset $
 but $Z_1\cap \cdot \cdot \cdot \cap \widehat{ Z_j}
 \cap \cdot \cdot \cdot \cap Z_k\ne \emptyset$ for all $j$, $1\le j \le k$.
 Equivalently, this means $<v_1,...,v_k>\notin \Sigma$
 but $<v_1,...,\widehat{v_j},...,v_k>\in \Sigma$ for all $j$ and we call
  $P=\{ v_1,...,v_k \}$ a {\em primitive collection}.

 If $S:=\{Z_1,...,Z_k\}
 $ is a primitive set, the element $v:=v_1+...+v_k$ lies
 in the relative interior of a unique cone of $\Sigma $, say the cone
 generated by $v_1',...,v_s'$ and
 $v_1+...+v_k=a_1v_1'+...+a_sv_s'$ with $a_i>0$
 is the corresponding {\em primitive relation}.
\end{definition}

\vspace{3mm}

 If $Y$ is a smooth toric variety of dimension $n$
(hence $n$ is also the dimension of the lattice $N$) and $m$ is
the number of toric divisors of $Y$ (and hence the number of
1-dimensional rays in $\Sigma $) then the Picard number of $Y$ is
$\rho (Y)=m-n$ and the anticanonical divisor $-K_Y$ is given by
$-K_Y=Z_1+\cdots+Z_m$. A smooth toric Fano variety $Y$ is a smooth
toric variety with the anticanonical divisor $-K_Y$ ample.

\vskip 2mm

It is well known that isomorphism classes of $d$-dimensional
smooth Fano toric varieties  correspond to isomorphism classes of
smooth Fano $d$-polytopes, that is, fully dimensional convex
lattice polytopes in $\RR^d$ such that the origin is in the
interior of the polytopes and the vertices of every facet is a
basis of the integral lattice $\ZZ^d \subset \RR^d$. Smooth Fano
$d$-polytopes have been intensively studied during the last
decades and completely classified up to dimension 4 (\cite{Bat} and \cite{Sat}). In higher dimension, they are classified under some additional assumptions; for instance, when the polytopes have few vertices (see \cite{Kl}), maximal number of vertices (see \cite{Casagrande} and \cite{Obro}) or some extra symmetries (see \cite{CC}).

\vskip 2mm
In our works \cite{CM} and \cite{CMZ}; we described the bounded derived category of smooth Fano $d$-dimensional polytopes with few vertices (see \cite{CMZ}; Corollary 4.13) and, in this paper, we will deal with smooth Fano
$d$-dimensional polytopes with maximal number of  vertices.
It is known that $3d $ is an upper bound for the
number of vertices of a Fano $d$-polytope and in the following
theorem we recall the classification of  smooth Fano
$d$-polytopes with maximal, if $d$ is odd,  and (almost) maximal, if $d$ is even, number of vertices. This classification
 turns out to be the classification of smooth Fano
$d$-dimensional toric varieties with maximal, if $d$ is odd, and (almost) maximal, if $d$ is even,
Picard number.

\vspace{3mm}
\begin{theorem}
\label{classificacioPolitop} Let $P \subset N_{\RR}$ be a  smooth Fano polytope and $e_0, \cdots, e_{d-1}$ a basis
of the lattice $\ZZ^d$. The following holds:

\vskip 2mm
\begin{itemize}
\item[(1)]
The number of vertices of $P$ is bounded by $3d$ if $d$ is even
and by $3d-1$ if $d$ is odd.
\item[(2)]
If $d$ is even and $P$ has exactly $3d$ vertices, then $P$ is
the convex hull of the $3d$ points
\[\pm e_0, \quad  \pm e_1, \quad \cdots, \quad \pm e_{d-2}, \quad \pm e_{d-1} \]
\[ \pm (e_0-e_1),  \quad \pm(e_2-e_3), \quad  \cdots, \quad \pm (e_{d-2}-e_{d-1}). \]
\item[(3)]
 If $d$ is even and $P$ has exactly $3d-1$ vertices, then $P$ is
the convex hull of the $3d-1$ points
\[ e_0, \quad \pm e_1, \quad \cdots, \quad \pm e_{d-2}, \quad \pm e_{d-1} \]
\[ \pm (e_0-e_1), \quad \pm(e_{2}-e_3), \quad  \cdots, \quad \pm (e_{d-2}-e_{d-1}). \]

\item[(4)] If $d$ is odd  and $P$ has $3d-1$ vertices, then $P$ is the
convex hull of the $(3d-1)$ points
\[e_0, \quad \pm e_1, \cdots, \quad \pm e_{d-1} \]
\[ e_1-e_0, \quad \pm(e_1-e_2), \quad \pm (e_3-e_4), \cdots, \quad \pm (e_{d-2}-e_{d-1}) \]
or the convex hull of the $(3d-1)$  points
\[\pm e_0, \quad \pm e_1, \cdots, \quad \pm e_{d-1} \]
\[  \pm(e_1-e_2), \quad \pm (e_3-e_4), \cdots, \quad \pm (e_{d-2}-e_{d-1}) \]
\end{itemize}
\end{theorem}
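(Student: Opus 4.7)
The plan is to invoke the classification of Casagrande \cite{Casagrande} in even dimension and Obro \cite{Obro} in odd dimension, while outlining the structural arguments that drive both. The starting point is Batyrev's framework of primitive collections from Definition \ref{primitrelat} together with the quantitative consequence of Fano-ness: for every primitive relation $v_1+\cdots+v_k=a_1v'_1+\cdots+a_sv'_s$ in a smooth Fano polytope one has $a_1+\cdots+a_s<k$, and moreover the $d$ vertices of each facet form a $\ZZ$-basis of $N=\ZZ^d$. These two facts will be the workhorses of everything below.

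To obtain the bound in (1), I would pair off vertices using primitive collections of length two. For a vertex $v$, let $A(v)$ denote the set of vertices $w$ for which $\{v,w\}$ is primitive (equivalently, the pair spans no edge of $P$). The Fano inequality forces each such pair either to satisfy $v+w=0$ (an antipodal pair) or $v+w=v'$ for a unique further vertex $v'$ (a \emph{split} triple). A careful bookkeeping balancing antipodal pairs and split triples against the basis condition at each facet yields $m\le 3d$. In odd dimension the split triples must organise themselves into paired coordinate directions of the form $\pm(e_{2i}-e_{2i+1})$, and this parity obstruction sharpens the bound to $m\le 3d-1$.

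For (2)–(4), I would assume the bound is saturated (or one short in the almost-maximal case of (3)) and run a rigidity argument. After a $GL_d(\ZZ)$ change of coordinates, all (or all but one) axis-vertices $\pm e_i$ appear in $P$, and each remaining vertex is then forced by its primitive relations to take the form $\pm(e_i-e_j)$ for a fixed-point-free involution on the coordinate indices; the smoothness condition at the facets pins down the involution up to relabelling. I expect the main obstacle to lie in case (4): with $d$ odd, exactly one axis direction appears unpaired, and a delicate inspection of primitive collections of lengths $2$ and $3$, together with the Fano ampleness relations, is needed to show that only the two explicit families listed can arise. The even-dimensional statements (2) and (3) are comparatively cleaner because the pairing of coordinate directions is automatic once the extremal count is imposed.
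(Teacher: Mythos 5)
Your proposal is essentially the same as the paper's proof: the paper disposes of this theorem with a one-line citation to \cite{Obro}, Theorem 1 and \cite{Casagrande}, Theorem 1, which is precisely the move you lead with. The additional structural sketch (primitive relations of degree $<k$, antipodal pairs versus split triples, rigidity at the extremal count) is a reasonable outline of what happens inside those references, but it plays no role in the paper itself.
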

\begin{proof}
See \cite{Obro}; Theorem 1 and \cite{Casagrande}; Theorem 1.
\end{proof}

\vspace{3mm}

Recall that the Picard number of a $d$-dimensional smooth Fano toric
variety is equal to the number of vertices of the associated Fano
polytope minus $d$. So, if we denote by $S_2$ the blow up of $\PP^2$
at two torus-invariant points and by $S_3$  the blow up of
$\PP^2$ at three torus-invariant points, the above classifying
result can be read off in the following way

\vspace{3mm}

\begin{proposition}
\label{classificacioVarietat} Let $X$ be a $d$-dimensional
smooth Fano toric  variety with Picard number $\rho_X$. Then,
\vskip 2mm \begin{itemize}
\item[(1)]
If $d$ is even, $\rho_X \leq 2d$ and there is up to isomorphism
only one $X$ with $\rho_X=2d$, namely $(S_3)^{\frac{d}{2}}$, and
one with $\rho_X=2d-1$, namely $S_2 \times (S_3)^{\frac{d-2}{2}}$.
\item[(2)]
2) If $d$ is odd, $\rho_X \leq 2d-1$ and there are up to
isomorphism precisely two $X$ with $\rho_X=2d-1$, namely $\PP^1
\times (S_3)^{\frac{d-1}{2}}$ or a unique determined toric
$(S_3)^{\frac{d-1}{2}}$-fiber bundle over $\PP^1$.
\end{itemize}
\end{proposition}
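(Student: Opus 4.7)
The plan is to translate the polytope classification of Theorem \ref{classificacioPolitop} into a classification of toric varieties, using the standard dictionary between smooth Fano $d$-polytopes and smooth Fano toric $d$-folds together with the identity $\rho_X = |\mathrm{Vert}(P_X)| - d$ (since each vertex of $P_X$ is a ray generator of $\Sigma(X)$).

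The three building blocks I would use are the Fano polytopes with vertex sets $\{\pm e\} \subset \ZZ$ for $\PP^1$, $\{\pm e_0, \pm e_1, \pm(e_0-e_1)\} \subset \ZZ^2$ for $S_3$, and $\{e_0, \pm e_1, \pm(e_0-e_1)\} \subset \ZZ^2$ for $S_2$. I would also record the elementary fact that if the vertex set of a Fano polytope $P \subset N_\RR$ partitions as $V_1 \sqcup V_2$ with $V_i$ contained in complementary sublattices $N_1, N_2$ satisfying $N = N_1 \oplus N_2$, and each $V_i$ is the vertex set of a Fano polytope in $(N_i)_\RR$, then $\Sigma(X_P)$ is the product of the two subfans and $X_P$ is the product of the corresponding toric varieties.

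With these tools, item (1) and the first subcase of (2) follow by direct inspection of Theorem \ref{classificacioPolitop}. For $d$ even with $\rho_X = 2d$, the vertex set in case (2) of that theorem partitions into $d/2$ copies of the $S_3$-hexagon supported on the pairwise orthogonal sublattices $\ZZ e_{2i} \oplus \ZZ e_{2i+1}$, giving $X \cong (S_3)^{d/2}$. For $d$ even with $\rho_X = 2d-1$, the same partition applies except the first block is missing the vertex $-e_0$ and therefore matches $P_{S_2}$, giving $X \cong S_2 \times (S_3)^{(d-2)/2}$. For $d$ odd with $\rho_X = 2d-1$, the second polytope in case (4) of that theorem splits as $\{\pm e_0\}$ on $\ZZ e_0$ plus $(d-1)/2$ copies of the $S_3$-hexagon on $\ZZ e_{2j-1} \oplus \ZZ e_{2j}$, giving $X \cong \PP^1 \times (S_3)^{(d-1)/2}$.

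The only case requiring real work is the first polytope in Theorem \ref{classificacioPolitop}(4), whose vertex set does not split as a direct sum because the vertex $e_1 - e_0$ mixes the $e_0$- and $e_1$-directions. Here I would consider the lattice projection $\pi \colon N \to \ZZ$ defined by $e_0 \mapsto 1$ and $e_i \mapsto 0$ for $i \geq 1$. Every ray of $\Sigma(X)$ either projects to $0$ (the fiber rays) or to $\pm 1$ (namely $e_0 \mapsto 1$ and $e_1 - e_0 \mapsto -1$), and I would check that $\pi$ is a map of fans onto the $\PP^1$-fan. The induced toric morphism $X \to \PP^1$ has fiber corresponding to the subfan in $\ker\pi = \ZZ e_1 \oplus \cdots \oplus \ZZ e_{d-1}$, which by the same partition argument is the product fan for $(S_3)^{(d-1)/2}$. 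Uniqueness follows because Theorem \ref{classificacioPolitop}(4) lists exactly two polytopes. The main obstacle, and the point I would treat most carefully, is showing that this morphism is a locally trivial fiber bundle but not a global product: local triviality is forced by the fact that the restriction of $\Sigma(X)$ over each maximal cone of the $\PP^1$-fan is a product fan, while non-triviality is witnessed by the lift $e_1 - e_0$ of the negative ray, which twists the transition between the two charts by a nonzero class.
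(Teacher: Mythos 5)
Your argument is correct in outline, but it takes a genuinely different route from the paper: the paper's entire proof of Proposition \ref{classificacioVarietat} is a citation to \O bro and to Kreuzer--Nill (Proposition 4.1), i.e.\ the variety-level classification is taken from the literature just as the polytope-level one is. You instead \emph{derive} the proposition from Theorem \ref{classificacioPolitop}, using the dictionary $\rho_X=|\mathrm{Vert}(P)|-d$, the free-sum decomposition of the vertex set into blocks supported on complementary sublattices (which identifies the face fan as a product fan, hence $X_P$ as a product), and, for the one non-split polytope in part (4), the lattice projection $e_0\mapsto 1$, $e_i\mapsto 0$ onto the fan of $\PP^1$. This buys self-containedness modulo Theorem \ref{classificacioPolitop} and makes visible \emph{why} $X_d$ is an $(S_3)^{\frac{d-1}{2}}$-bundle over $\PP^1$ --- a structure the paper asserts and then uses heavily (e.g.\ in the Poincar\'e polynomial computation (\ref{rankK0}) and in the choice of maximal cones in Proposition \ref{summandspliting}) --- at the cost of two verifications you only sketch: (i) local triviality requires checking that every facet of the polytope contains exactly one of $e_0$, $e_1-e_0$ and that its remaining $d-1$ vertices form a facet of the fiber polytope (the first half follows from the primitive collection $\{w_0,v_0\}$ in (\ref{primitive}) plus a rank count, the second needs the explicit fan); and (ii) non-triviality of the bundle, i.e.\ $X_d\not\cong \PP^1\times (S_3)^{\frac{d-1}{2}}$, is cleanest not via your ``twist'' heuristic but by observing that the two polytopes in Theorem \ref{classificacioPolitop}(4) are listed as distinct isomorphism classes, so the corresponding varieties are non-isomorphic. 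Neither point is a gap in the approach, only in the level of detail.
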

\begin{proof}
See \cite{Obro} and \cite{Nill}; Proposition 4.1.
\end{proof}

\vspace{3mm}

The main goal of the next section is to give  an orthogonal basis made up of line bundles
for the derived category $D^b(X)$ of bounded complexes of coherent sheaves
on the toric varieties $X$ described in Proposition \ref{classificacioVarietat}, mainly on smooth Fano toric varieties of dimension $d$ with (almost)
maximal Picard number $\rho _X$. If $d$ is even and $2d-1\le \rho _X \le 2d$ or $d$ is odd, $\rho _X=2d-1$ and  $X$
isomorphic to  $\PP^1 \times  (S_3)^{\frac{d-1}{2}}$ then, applying  \cite{CMZ}; Theorem 4.17, we will see that  there is an
orthogonal basis for the derived category $D^b(X)$ of bounded complexes of coherent sheaves on
$X$ made up of line bundles. For the remaining case, namely
a toric $(S_3)^{\frac{d-1}{2}}$-fiber bundle over $\PP^1$ we will
explicitly compute such basis. To this end, we need to fix some
notation and to develop some technical results.

\vspace{3mm}

From now on, for any odd integer $d \geq 3$, we will denote by
$X_d$ the toric $(S_3)^{\frac{d-1}{2}}$-fiber bundle over $\PP^1$
quoted in Proposition \ref{classificacioVarietat}.

\vspace{3mm}

For any smooth projective toric variety $X$, we denote by $P_X(t)$ its
Poincar\'e polynomial. It is well known that the topological Euler
characteristic of $X$, $\chi(X)$ verifies
\[ \chi(X)=P_X(-1)\]
and $\chi(X)$ coincides with the number of maximal cones of $X$,
that is, with the rank of the Grothendieck group $K_0(X)$ of $X$.
On the other hand, since $X_d$ is a $(S_3)^{\frac{d-1}{2}}$-fiber bundle over $\PP^1$ we
have (\cite{Fu};Pag 92-93):

\[ P_{X_d}(t)=P_{(S_3)^{\frac{d-1}{2}}}(t) P_{\PP^1}(t).\]

Thus putting altogether we deduce

\begin{equation} \label{rankK0} \rank(K_0(X_d))= P_{(S_3)^{\frac{d-1}{2}}}(-1) P_{\PP^1}(-1)=2 \cdot 6^{\frac{d-1}{2}}. \end{equation}

By Theorem \ref{classificacioPolitop} and Proposition
\ref{classificacioVarietat}, $X_d$ is the toric variety associated
to the convex hull of the $(3d-1)$ points

\[e_0, \quad \pm e_1, \cdots, \quad \pm e_{d-1} \]
\[ e_1-e_0, \quad \pm(e_1-e_2), \quad \pm (e_3-e_4), \cdots, \quad \pm (e_{d-2}-e_{d-1}) \]

 $e_0, \cdots, e_{d-1}$ being a basis of the lattice $\ZZ^d$.
Denote by

 \[v_0=e_0,  \quad v_{2k-1}=e_k, \quad  v_{2k}=-e_k, \quad \mbox{for } 1 \leq k \leq d-1=2l \]
 \[w_0=e_1-e_0,  \quad w_{2j-1}=e_{2j-1}-e_{2j}, \quad  w_{2j}=e_{2j}-e_{2j-1}, \quad \mbox{for } 1 \leq j \leq l, \]

 \noindent the ray generators of the fan $\Sigma_d$ associated to $X_d$. For
 a later use, it is convenient to remark that the following is the
 list of all primitive collections on $X_d$, $d=2l+1$ (see \cite{CC}; Section 2)

 \begin{equation}
 \label{primitive}
 \begin{array}{lll}
 \{v_{2k-1},v_{2k} \} & \mbox{for} & 1 \leq k \leq 2l, \\
  \{w_{2j-1},w_{2j} \} & \mbox{for} & 1 \leq j \leq l, \\
   \{w_{2j-1},v_{4j-2} \} & \mbox{for} & 1 \leq j \leq l, \\
      \{w_{2j-1},v_{4j-1} \} & \mbox{for} & 1 \leq j \leq l, \\
         \{w_{2j},v_{4j-3} \} & \mbox{for} & 1 \leq j \leq l, \\
            \{w_{2j},v_{4j} \} & \mbox{for} & 1 \leq j \leq l, \\
             \{w_{0},v_{0} \} . \\
            \end{array}
 \end{equation}

\vspace{3mm}

For the rest of the work, we will use the following notation when
we will deal with toric divisors on $X_d$, for $d=2l+1 \geq 3$. We
will denote by
\vskip 2mm
\begin{itemize}
\item $Z_{i}^+$ the toric divisor associated to $e_i$, $0 \leq i
\leq d-1$, \item $Z_{i}^-$ the toric divisor associated to $-e_i$,
$1 \leq i \leq d-1$, \item $D_0$ the toric divisor associated to
$e_1-e_0$, \item $D_{j}^+$ the toric divisor associated to
$e_{2j-1}-e_{2j}$, $1 \leq j \leq l$,  \item $D_{j}^-$
the toric divisor associated to $-(e_{2j-1}-e_{2j})$, $1 \leq j
\leq l$.
\end{itemize}

\vspace{3mm}

Given any smooth complete toric variety $Y$, Bondal described a method to produce a
candidate collection of line bundles on $Y$, which for certain
classes  of Fano toric varieties is expected to be an orthogonal basis of the derived category $D^b(Y)$ of bounded complexes of coherent sheaves on $Y$. This method requires to compute
 the
different summands appearing on the Frobenius splitting of the
tautological line bundle which will be achieved applying the algorithm that we will describe now.

\vskip 2mm For any smooth complete toric variety $Y$ of
dimension $n$ with an $n$-dimensional torus $T$ acting on it and
for any integer $\ell \in \ZZ$, there is a well-defined toric
morphism
$$\pi_{\ell }:Y\longrightarrow Y$$ which restricts, on the torus
$T$, to the map $$\pi_{\ell }:T \longrightarrow T, \quad t\mapsto
t^{\ell }.$$ The map $\pi_{\ell } $ is the factorization map with
respect to the action of the group of $\ell $ torsion of $T$. We
fix a prime integer $p\gg 0$. By  \cite{JF}; Theorem 1 and
Proposition 2, $(\pi _{p })_{*}(\cO_Y)^{\vee}$ is a vector bundle of rank
$p^n$ which splits into a sum of line bundles $$ (\pi _{p
})_{*}(\cO_Y)^{\vee} =\oplus _{\chi } \cO_Y(D_{\chi})$$ where the sum is
taken over the group of characters of the $p$-torsion subgroup of
$T$. Moreover,
$$c_1((\pi _{p })_{*}(\cO_Y)^{\vee})=\cO_Y(-\frac{p^{n-1}(p-1)}{2}K_Y)$$
where $K_Y$ is the canonical divisor of $Y$.

\vspace{3mm} For sake of completeness, we recall here the
algorithm described  by Thomsen in \cite{JF} that we will apply
later in order to get explicitly the summands of the splitting of
$ (\pi _{p })_{*}(\cO_{X_d})$.

\vspace{3mm}

 Given any  smooth complete toric variety $Y$ of
dimension $n$, Picard number $\rho$ (hence $n+\rho$ toric
divisors) and Group of Grothendieck $K_0(Y)$ of rank $s$ (hence
$s$ maximal cones),  we consider $\{\sigma_1, \cdots,
\sigma_{s}\}$ the set of maximal cones of the fan $\Sigma$
associated to $Y$ and we denote by $v_{i_1}, \cdots, v_{i_n}$ the
generators of $\sigma_i$. Recall that since $Y$ is smooth, every
rational cone $\sigma \in \Sigma$ is generated by a part of a
$\ZZ$-basis of $N$. For each index $i$, $1 \leq i \leq s$, we form
the matrix $A_i \in GL_n(\ZZ)$ having as the $j$-th row the
coordinates of $v_{i_j}$ expressed in the basis $e_1, \cdots, e_n$
of $N$. Let $B_i=A_i^{-1} \in GL_n(\ZZ)$ and we denote by $w_{ij}$
the $j$-th column vector in $B_i$. Introducing the symbols
$Y^{\hat{e_1}}, \cdots, Y^{\hat{e_n}}$, we form the ring

\[R=K[(Y^{\hat{e_1}})^{\pm 1}, \ldots, (Y^{\hat{e_n}})^{\pm 1}]  \]
which is the coordinate ring of the torus $T \subset Y$ and for
any $i$,  $1 \leq i \leq s $, the coordinate ring of the open
affine subvariety $U_{\sigma_i}$ of $Y$ corresponding to the cone
$\sigma_i$ is the subring

\[ R_i=K[Y^{w_{i1}}, \cdots, Y^{w_{in}}] \subset R\]
where we use the notation

\[ Y^{w}:=(Y^{\hat{e_1}})^{w_1} \cdots
(Y^{\hat{e_n}})^{w_n}\] if $w=(w_1, \cdots,w_n)$. For simplicity
we will also write $Y_{ij}:=Y^{w_{ij}}$.

For each $i$ and $j$, we denote by $R_{ij}$ the coordinate ring of
$\sigma_i \cap \sigma_j$ and we define
\begin{equation} \label{Iij}
I_{ij}:= \{v \in M_{n \times 1}(\ZZ) | Y_i^v \quad \mbox{is a unit
in } R_{ij} \},
\end{equation}
\begin{equation} \label{Cij}
C_{ij}:= B_j^{-1}B_i \in GL_n(\ZZ)
\end{equation}
where we use the notation $Y_i^v:= (Y_{i1})^{v_1} \cdots
(Y_{in})^{v_n}$ being $v$ a column vector with entries $v_1,
\cdots, v_n$.

For every $p \in \NN$ and $w \in I_{ij}$, we define
\[ P_p^n:= \{v \in M_{n \times 1}(\ZZ) | 0 \leq v_i < p \}\]
and the maps
\[h_{ijp}^{w}: P_p ^n\rightarrow R_{ij} \]
\[r_{ijp}^{w}: P_p ^n\rightarrow P_{p} ^n \]
by means of the following equality: for any $v \in P_p$
\[C_{ij}v + w= p \cdot h_{ijp}^{w}(v)+ r_{ijp}^{w}(v). \]
By \cite{JF}; Lemma 2 and Lemma 3, these maps exist and they are
unique.

\vskip 2mm Recall that any toric Cartier divisor $D$ on $Y$ can be
represented in the form $\{(U_{\sigma_i}, Y_i^{u_i}) \}_{\sigma_i
\in \Sigma}$, $u_i \in M_{n\times 1}(\ZZ)$ (see \cite{Fu}; Chapter 3.3).
Once fixed the set $\{(U_{\sigma_i}, Y_i^{u_i}) \}_{\sigma_i \in
\Sigma}$ which represents a toric Cartier divisor $D$, we define
\[ u_{ij}=u_j-C_{ij}u_i.\]
Notice that if $\cO_{Y}(D)=\cO_Y$ is the trivial line bundle, then
for any pair $i$, $j$, we have $u_{ij}=0$.

For any $p \in \ZZ$ and any toric Cartier divisor $D$ on $Y$,
$(\pi_{p })_*(\cO_Y(D))^{\vee}$ is defined as follows: we fix a set
$\{(U_{\sigma_i}, Y_i^{u_i}) \}_{\sigma_i \in \Sigma}$
representing $D$ and we choose an index $l$ of a cone $\sigma_l
\in \Sigma$. Let $D_v$, $v \in P_p^n$, denote the Cartier divisor
represented by the set  $\{(U_{\sigma_i}, Y_i^{h_i}) \}_{\sigma_i
\in \Sigma}$ where, by definition
\[h_i=h_i^v:=h_{lip}^{u_{li}}(v).\]
Then, we have \begin{equation} \label{sumands} ( \pi_{p
})_*(\cO_Y(D))^{\vee}= \bigoplus_{v \in P_p^n} \cO_Y(D_v).\end{equation}

\begin{remark}
\label{explicacio} Recall that if $h_i=(h_{i1}, \cdots, h_{in})$
and $\alpha_{i1}^j, \cdots, \alpha_{in}^j$ are the entries of the
$j$-th column vector of $B_i$, then by definition
\[ Y_i^{h_i}=({Y^{\hat{e_1}}}^{\alpha_{i1}^1}\cdots {Y^{\hat{e_n}}}^{\alpha_{in}^1})^{h_{i1}}
({Y^{\hat{e_1}}}^{\alpha_{i1}^2}\cdots
{Y^{\hat{e_n}}}^{\alpha_{in}^2})^{h_{i2}} \cdots
({Y^{\hat{e_1}}}^{\alpha_{i1}^n}\cdots
{Y^{\hat{e_n}}}^{\alpha_{in}^n})^{h_{in}}. \] We denote by
\[ l_{\sigma_i}=(\alpha_{i1}^1 h_{i1}+\alpha_{i1}^2 h_{i2}+\cdots+ \alpha_{i1}^n
h_{in})\hat{e_1}+ (\alpha_{i2}^1 h_{i1}+\alpha_{i2}^2
h_{i2}+\cdots+ \alpha_{i2}^n h_{in})\hat{e_2}+ \cdots \] \[ \cdots
+ (\alpha_{in}^1 h_{i1}+\alpha_{in}^2 h_{i2}+\cdots+ \alpha_{in}^n
h_{in})\hat{e_n} \in M. \] According to this notation, if $D_v$ is
the Cartier divisor represented by the set  $\{(U_{\sigma_i},
Y_i^{h_i}) \}$, then \[D_v= \beta_v^1Z_1+ \cdots+
\beta_v^{n+\rho}Z_{n+\rho}
\] where
$$\beta_v^j=-l_{\sigma_k}(v_j)$$ for any maximal cone $\sigma_k$
containing the ray generator $v_j$ associated to the toric divisor
$Z_j$. Indeed, for any pair of maximal cones $\sigma_k$ and
$\sigma_{k'}$ containing $v_j$,
$l_{\sigma_k}(v_j)=l_{\sigma_{k'}}(v_j)$.
\end{remark}

Using this algorithm we are going to prove

\begin{proposition} \label{summandspliting} Let $X_d$ be the toric $(S_3)^{\frac{d-1}{2}}$-fiber bundle over
$\PP^1$, $d=2l+1$ and $p  \gg 0$ a prime integer. With the above
notations the different summands of $(\pi_p)_*(\cO_{X_d})^{\vee}$ are
{\footnotesize\[
 \cT_3 \otimes \bigotimes_{k=2}^{l}(\cO
\oplus \cO(Z_{2k}^-+D_k^+) \oplus \cO(Z_{2k-1}^-+D_k^-) \oplus
\cO(Z_{2k-1}^-+Z_{2k}^-) \oplus \cO(Z_{2k-1}^-+Z_{2k}^-+D_k^-)
\oplus \cO(Z_{2k-1}^-+Z_{2k}^-+D_k^+))
\]}
where {\footnotesize\[ \begin{array}{ll} \cT_3 \cong & \cO \oplus
\cO(D_0) \oplus \cO(Z_1^-+D_1^-) \oplus \cO(Z_1^-+D_1^-+D_0)
\oplus \cO(Z_2^-+D_1^+) \oplus \cO(Z_2^-+D_1^++D_0) \\ &  \oplus
\cO(Z_1^-+Z_2^-) \oplus \cO(Z_1^-+Z_2^-+D_1^+)  \oplus
\cO(Z_1^-+Z_2^-+D_1^-) \oplus \cO(Z_1^-+Z_2^-+D_0) \\ & \oplus
\cO(Z_1^-+Z_2^-+D_1^++D_0)  \oplus \cO(Z_1^-+Z_2^-+D_1^-+D_0).
\end{array} \] }
\end{proposition}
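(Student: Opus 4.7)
The plan is to apply Thomsen's algorithm directly to $X_d$, exploiting the block structure of the fan $\Sigma_d$. The key structural observation is that, apart from the primitive collection $\{w_0,v_0\}$ which couples the coordinates $e_0$ and $e_1$, the remaining primitive collections in (\ref{primitive}) decouple into independent blocks: a ``base block'' $B_1$ in the three coordinates $e_0,e_1,e_2$ involving the rays $v_0,v_1,v_2,v_3,v_4,w_0,w_1,w_2$, and for each $2\le k\le l$ a ``fiber block'' $B_k$ in the two coordinates $e_{2k-1},e_{2k}$ involving the rays $v_{4k-3},v_{4k-2},v_{4k-1},v_{4k},w_{2k-1},w_{2k}$.

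First I would enumerate the maximal cones of $\Sigma_d$: a $d$-dimensional cone is maximal if and only if its set of generators contains no primitive collection, and by the block structure this condition decouples over the blocks. A direct count yields twelve admissible choices for $B_1$ (matching $\chi(\PP^1)\cdot\chi(S_3)=12$) and six for each $B_k$ with $k\ge 2$ (matching $\chi(S_3)=6$), giving the expected $12\cdot 6^{l-1}=2\cdot 6^{(d-1)/2}$ maximal cones, in agreement with (\ref{rankK0}). For every such $\sigma_i$, the matrix $A_i$ of ray generators is block diagonal with a $3\times 3$ base block and $(l-1)$ blocks of size $2\times 2$, and so are $B_i=A_i^{-1}$ and the transition matrices $C_{ij}=B_j^{-1}B_i$. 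Consequently the maps $h_{ijp}^{w}$ and $r_{ijp}^{w}$ of (\ref{Iij})--(\ref{Cij}) act blockwise, and writing $v=(v^{(1)},v^{(2)},\dots,v^{(l)})\in P_p^3\times P_p^2\times\cdots\times P_p^2=P_p^d$, the divisor $D_v$ in (\ref{sumands}) splits as $D_{v^{(1)}}^{(1)}+\sum_{k\ge 2}D_{v^{(k)}}^{(k)}$, with summands supported on disjoint sets of toric divisors.

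To finish, I would use Remark \ref{explicacio} to compute, block by block, the coefficients $\beta_v^j=-l_{\sigma_k}(v_j)$ of each toric divisor in $D_v$. For every block $B_k$ with $k\ge 2$, running over the six maximal cones and enumerating $v^{(k)}\in P_p^2$ produces, up to linear equivalence, exactly the six line bundles $\cO, \cO(Z_{2k}^-+D_k^+), \cO(Z_{2k-1}^-+D_k^-), \cO(Z_{2k-1}^-+Z_{2k}^-), \cO(Z_{2k-1}^-+Z_{2k}^-+D_k^-), \cO(Z_{2k-1}^-+Z_{2k}^-+D_k^+)$ listed in the statement, recovering the local Thomsen splitting on the $S_3$-fiber. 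The main obstacle and most laborious step is the analogous computation for the base block $B_1$: the primitive relation $w_0+v_0=v_1=e_1$ couples the $\PP^1$-direction to the first $S_3$-factor and prevents any further decoupling, so one has to work out all twelve of the $3\times 3$ matrices $A_i$ and their associated functionals $l_{\sigma_i}$ in detail, and verify that the resulting twelve line bundles are precisely those listed as the summands of $\cT_3$. Once the per-block lists match, the claimed tensor product decomposition of $(\pi_p)_*\cO_{X_d}^\vee$ follows directly from the block-diagonal structure of the divisor data.
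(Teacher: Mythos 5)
Your proposal is correct, and its irreducible computational content coincides with the paper's, but the two arguments are organized along genuinely different lines. You make explicit that every primitive collection in (\ref{primitive}) is confined to a single block, so that $\Sigma_d$ is a product fan (equivalently $X_d\cong X_3\times (S_3)^{l-1}$) and Thomsen's algorithm runs blockwise; the paper never states this product structure but encodes the same block-diagonality of the matrices $A_i$ as an induction on odd $d$, adjoining one $2\times 2$ block (one $S_3$-factor) per step, with the $d=3$ base case carrying all of the genuinely coupled data in the coordinates $e_0,e_1,e_2$. The main practical divergence is in how many cones get processed: the paper exploits Remark \ref{explicacio} to observe that the coefficient of a toric divisor $Z_j$ in $D_v$ equals $-l_{\sigma}(v_j)$ for \emph{any} maximal cone $\sigma$ containing the ray $v_j$, so three well-chosen maximal cones covering all rays suffice at every stage; you instead propose to compute $l_{\sigma_i}$ for all twelve cones of the base block and all six of each fiber block, which is valid but multiplies the bookkeeping severalfold without changing the answer. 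What your route buys is conceptual transparency --- once the product structure is in hand one could even dispense with the fiber-block computations by citing the known splitting of $(\pi_p)_*\cO_{S_3}$ together with the compatibility of $\pi_p$ with products --- whereas the paper's route minimizes the explicit matrix work. In both arguments the step that cannot be avoided is the case analysis over $v^{(1)}\in P_p^3$ for the base block $X_3$, which you correctly single out as the laborious part and which occupies the bulk of the paper's proof.
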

\begin{proof}
By  \cite{JF}; Theorem 1 and Proposition 2, $(\pi _{p
})_{*}(\cO_{X_d})^{\vee}$ is a vector bundle of rank $p^d$ which splits
into a sum of line bundles \begin{equation} \label{split}  (\pi
_{p })_{*}(\cO_{X_d})^{\vee} =\bigoplus_{v^d \in P_p^d}
\cO_X(D_{v^d})\end{equation}
 and using the
above algorithm, we will determine all these different summands
$\cO_{X_d}(D_{v^d})$ moving $v^d \in P_p^d$. To this end, we will
proceed by induction on odd $d$.

\vspace{3mm} Assume $d=3$. Take $e_0, e_1, e_{2}$ be a $\ZZ$-basis
of the lattice $\ZZ^3$ and denote by
 \[v_0=e_0,  \quad v_{1}=e_1, \quad v_{2}=-e_1, \quad v_{3}=e_2, \quad  v_{4}=-e_2 \]
 \[w_0=e_1-e_0,  \quad w_{1}=e_{1}-e_{2}, \quad  w_{2}=e_{2}-e_{1} \]
 the ray generators of the fan $\Sigma_3$ associated to $X_3$.

 It follows from Remark \ref{explicacio} that in order to get all the
 different summands appearing in the splitting (\ref{split}),
 it is enough to determine $l_{\sigma_1}$, $l_{\sigma_2}$ and
$l_{\sigma_3}$ where $\sigma_1$, $\sigma_2$ and $\sigma_3$ are
three maximal cones of $\Sigma_3$ involving all the
ray generators $v_i$, $0 \leq i \leq 4$ and $w_i$, $0 \leq i \leq
2$. We choose the following three maximal cones of $\Sigma_3$:
\[  \sigma_1:= \langle v_0,v_1,v_3 \rangle,  \quad   \sigma_2:= \langle v_2, w_0, w_2,  \rangle,
  \quad \sigma_3:= \langle v_0,v_4,  w_{1} \rangle.  \]

The matrices $A_i$, $1 \leq i \leq 3$, having as the $j$-th row
the coordinates of the $j$-vector of $\sigma_i$ expressed in the
basis $e_0, e_1, e_{2}$ are:

\[A_1= \left ( \begin{array}{ccc}  1 &0& 0 \\ 0& 1 & 0  \\ 0 & 0
& 1 \\ \end{array} \right ) \quad A_2= \left (
\begin{array}{ccc}  0 &-1& 0 \\-1& 1 & 0  \\ 0 & -1 & 1 \\
\end{array} \right ) \quad A_3= \left ( \begin{array}{ccc}  1 &0& 0 \\ 0& 0 & -1  \\ 0 & 1
& -1 \\ \end{array} \right ) \] and their inverses are given by

\[B_1= \left ( \begin{array}{ccc}  1 &0& 0 \\ 0& 1 & 0  \\ 0 & 0
& 1 \\ \end{array} \right ) \quad B_2= \left (
\begin{array}{ccc}  -1 &-1& 0 \\-1& 0 & 0  \\ -1 & 0 & 1 \\
\end{array} \right ) \quad B_3= \left ( \begin{array}{ccc}  1 &0& 0 \\ 0& -1 & 1  \\ 0 & -1
& 0 \\ \end{array} \right ). \]
 Fix the index $l=1$ corresponding to the cone $\sigma_1$. By (\ref{Cij}),
\[C_{1i}=(B_i)^{-1} B_1=A_i \]
and, as we pointed out before, if $\{U_{\sigma_j}, X_j^{u_j}
\}_{\sigma_j \in \Sigma_3}$ represents the zero divisor then for
any pair $i$, $j$,
\[u_{1i}=u_i-C_{1i}u_1=0. \]
Hence, for any $v \in P_p^3$, $h_i^{v}:=h_{1ip}^{u_{1i}}(v)$ is
defined by the relation
\[ A_i.v=p.h_i^{v}+r_{1ip}(v) \]
for a unique $r_{1ip}(v) \in P_p^3$. For any $v=(a_0,a_1, a_{2}) \in
P_p^3$, we define
\[ \begin{array}{l} d_1:=  A_1.v=(a_0,a_1, a_{2}) \\
d_2:=  A_2.v=(-a_1,-a_0+a_1,-a_1+a_2) \\ d_3:=
A_3.v=(a_0,-a_2,a_1-a_2). \end{array}\]

\vspace{3mm} Notice that if $v \in P_p^3$, then $d_1 \in P_p^3$.
Hence,  we have $h_1^v=0$ and $l_{\sigma_1}=0$. Therefore, we
obtain
\[ l_{\sigma_1}(u)=0 \quad \mbox{for} \quad u  \in \sigma_1= \langle v_0,v_1,v_3 \rangle .\]
So, by Remark \ref{explicacio} we deduce that for any $v \in
P_p^3$,
\[ D_v=\beta_1Z_1^-+\beta_2Z_{2}^-+\beta_3D_0 + \beta_4 D_1^+
+ \beta_5 D_1^-\] where \[ \beta_1=-l_{\sigma_2}(v_2) \quad
\beta_2=-l_{\sigma_3}(v_4), \quad \beta_3=-l_{\sigma_2}(w_0) \quad
\beta_4=-l_{\sigma_3}(w_1), \quad \mbox{and} \quad
\beta_5=-l_{\sigma_2}(w_2).
\] To determine these coefficients we will consider different
cases.

\vskip 2mm \noindent {\bf Case 1:} $a_0=a_1=a_2=0$.

In that case, $D_v=0$.

\vskip 2mm \noindent {\bf Case 2:} $a_1=a_2=0$ and $a_0 \neq 0$.

In that case, $d_2=(0,  -a_0,0)$ and $d_3=(a_0,0,0)$. Therefore,
$h_2^v=(0,-1,0)$, $h_3^v=0$, $l_{\sigma_2}=\hat{e}_0$,
$l_{\sigma_3}=0$ and thus $l_{\sigma_2}(v_2)=0$,
$l_{\sigma_2}(w_0)=-1$ and $l_{\sigma_2}(w_2)=0$ which gives us
\[ D_v=D_0. \]

\vskip 2mm \noindent {\bf Case 3:} $a_0=a_2=0$ and $a_1 \neq 0$.

In that case, $d_2=(-a_1, 0, -a_1)$ and $d_3=(0,0,a_1)$. Therefore,
$h_2^v=(-1,0,-1)$, $h_3^v=0$, $l_{\sigma_2}=\hat{e}_0+ \hat{e}_1$,
$l_{\sigma_3}=0$ and thus $l_{\sigma_2}(v_2)=-1$,
$l_{\sigma_2}(w_0)=0$ and $l_{\sigma_2}(w_2)=-1$ which gives us
\[ D_v=Z_1^-+D_1^-. \]

\vskip 2mm \noindent {\bf Case 4:} $a_0=a_1=0$ and $a_2 \neq 0$.

In that case, $d_2=(0,0,a_2)$ and $d_3=(0, -a_2, -a_2)$. Therefore,
$h_2^v=0$, $h_3^v=(0,-1,-1)$, $l_{\sigma_3}= \hat{e}_2$,
$l_{\sigma_2}=0$ and thus $l_{\sigma_3}(v_4)=-1$,
$l_{\sigma_3}(w_1)=-1$ which gives us
\[ D_v=Z_2^-+D_1^+. \]

\vskip 2mm \noindent {\bf Case 5:} $a_0,a_1 \neq 0$ and $a_2= 0$.

In that case, $d_2=(-a_1, -a_0+a_1, -a_1)$ and $d_3=(a_0,0,a_1)$. Therefore,
$h_2^v=(-1,0,-1)$ and $h_3^v=0$ if $-a_0+a_1 \geq 0$ or
$h_2^v=(-1,-1,-1)$ and $h_3^v=0$ if $-a_0+a_1 < 0$. The first case
do not contribute with a new summand and in the second case
$l_{\sigma_2}=2\hat{e}_0+ \hat{e}_1$ and $l_{\sigma_3}=0$. Thus
$l_{\sigma_2}(v_2)=-1$, $l_{\sigma_2}(w_0)=-1$ and
$l_{\sigma_2}(w_2)=-1$ which gives us
\[ D_v=Z_1^-+D_0+D_1^-. \]

\vskip 2mm \noindent {\bf Case 6:} $a_0,a_2 \neq 0$ and $a_1= 0$.

In that case, $d_2=(0, -a_0, a_2)$ and $d_3=(a_0,-a_2,-a_2)$.
Therefore, $h_2^v=(0,-1,0)$, $h_3^v=(0,-1,-1)$,
$l_{\sigma_2}=\hat{e}_0$ and  $l_{\sigma_3}=\hat{e}_2$. Thus
$l_{\sigma_2}(v_2)=0$, $l_{\sigma_2}(w_0)=-1$,
$l_{\sigma_2}(w_2)=0$, $l_{\sigma_3}(w_1)=-1$ and
$l_{\sigma_3}(v_4)=-1$ which gives us
\[ D_v=Z_2^-+D_0+D_1^+. \]

\vskip 2mm \noindent {\bf Case 7:} $a_1,a_2 \neq 0$ and $a_0= 0$.

In that case, $d_2=(-a_1, a_1, -a_1+a_2)$ and
$d_3=(0,-a_2,a_1-a_2)$. Therefore, the possibilities that we have
are \[ h_2^v=(-1,0,0), \quad h_3^v=(0,-1,-1) \quad \mbox{if} \quad
a_1-a_2 < 0 ; \]\[ h_2^v=(-1,0,0), \quad h_3^v=(0,-1,0) \quad
\mbox{if} \quad a_1-a_2 = 0 ; \]\[ h_2^v=(-1,0,-1), \quad
h_3^v=(0,-1,0) \quad \mbox{if} \quad a_1-a_2 > 0 . \] Arguing as
before the three news summands that we get are
\[ D_v=Z_1^-+Z_2^-+D_1^+, \quad D_v=Z_1^-+Z_2^- \quad D_v=Z_1^-+D_1^-+Z_2^-. \]

\vskip 2mm \noindent {\bf Case 8:} $a_0,a_1,a_2 \neq 0$.

In that case,  $d_2=(-a_1, -a_0+a_1, -a_1+a_2)$ and
$d_3=(a_0,-a_2,a_1-a_2)$.  Arguing as before, the three news summands that
we get are
\[ D_v=Z_1^-+Z_2^-+D_0+D_1^+, \quad D_v=Z_1^-+Z_2^- +D_0\quad D_v=Z_1^-+D_1^-+Z_2^-+D_0. \]

Putting all cases together we get that the different summands
appearing in (\ref{split}) for $d=3$ are {\footnotesize\[
\begin{array}{ll} \cT_3 \cong & \cO \oplus \cO(D_0) \oplus
\cO(Z_1^-+D_1^-) \oplus \cO(Z_1^-+D_1^-+D_0) \oplus
\cO(Z_2^-+D_1^+) \oplus \cO(Z_2^-+D_1^++D_0) \\ &  \oplus
\cO(Z_1^-+Z_2^-) \oplus \cO(Z_1^-+Z_2^-+D_1^+)  \oplus
\cO(Z_1^-+Z_2^-+D_1^-) \oplus \cO(Z_1^-+Z_2^-+D_0) \\ & \oplus
\cO(Z_1^-+Z_2^-+D_1^++D_0)  \oplus \cO(Z_1^-+Z_2^-+D_1^-+D_0)
\end{array}. \]}
and this concludes this initial case $d=3$.

 \vspace{3mm} For $3 < d=2l+1$ take $e_0, \ldots, e_{d-1}$ be a $\ZZ$-basis of the lattice $\ZZ^d$ with the
 convention that if $e_0, \ldots, e_{d-3}$ is a $\ZZ$-basis of $\ZZ^{d-2}$ we complete it to get
  $e_0, \ldots, e_{d-3}, e_{d-2}, e_{d-1}$ a $\ZZ$-basis of $\ZZ^{d}$.
  Recall that
 \[v_0=e_0,  \quad v_{2k-1}=e_k, \quad  v_{2k}=-e_k, \quad \mbox{for } 1 \leq k \leq d-1=2l \]
 \[w_0=e_1-e_0,  \quad w_{2j-1}=e_{2j-1}-e_{2j}, \quad  w_{2j}=e_{2j}-e_{2j-1}, \quad \mbox{for } 1 \leq j \leq l, \]
 are the ray generators of the fan $\Sigma_d$ associated to $X_d$.

 As we have seen in Remark \ref{explicacio}, to get all the
 different summands appearing in the splitting (\ref{split})
 it is enough to determine $l_{\sigma_1^d}$, $l_{\sigma_2^d}$ and
$l_{\sigma_3^d}$ where $\sigma_1^d$, $\sigma_2^d$ and $\sigma_3^d$
are three maximal cones of $\Sigma_d$ that altogether contain all
the ray generators $v_i$, $0 \leq i \leq 2d-2$ and $w_i$, $0 \leq
i \leq 2l$. We choose the following three maximal cones of
$\Sigma_d$:
\[ \begin{array}{l} \sigma_1^d:= \langle v_0,v_1,v_3, \cdots, v_{2(d-3)-1}, v_{2d-5}, v_{2d-3} \rangle,
 \quad \\  \sigma_2^d:= \langle v_2,v_6, \cdots, v_{2(d-4)}, w_0, w_2, \cdots , w_{2(l-1)}, v_{2(d-2)},  w_{2l} \rangle,
  \quad \\
\sigma_3^d:= \langle v_0,v_4, \cdots, v_{2(d-3)}, w_{1}, w_{3},
\cdots, w_{2l-3}, v_{2(d-1)}, w_{2l-1} \rangle. \end{array}\]
Notice that the set of ray generators of $\Sigma_d$ can be seen as
the set of ray generators of $\Sigma_{d-2}$ together with the ray
generators $v_{2d-5}, v_{2d-3}, v_{2(d-2)}, w_{2l}, v_{2(d-1)},
w_{2l-1} $ and that the following recursive relation holds:
\[\sigma_1^d=\langle  \sigma_1^{d-2} , v_{2d-5}, v_{2d-3} \rangle ,\]
\[\sigma_2^d= \langle  \sigma_2^{d-2}, v_{2(d-2)}, w_{2l} \rangle ,\]
\[\sigma_3^d= \langle  \sigma_3^{d-2} , v_{2(d-1)}, w_{2l-1} \rangle \]
where $\sigma_1^{d-2}, \sigma_2^{d-2}$ and $\sigma_3^{d-2}$ are
the corresponding maximal cones of $\Sigma_{d-2}$ that contain all
its ray generators.

Thus, the matrices $A_i^d$, $1 \leq i \leq 3$, having as the
$j$-th row the coordinates of the $j$-vector of $\sigma_i^d$
expressed in the basis $e_0, \cdots, e_{d-1}$ are:

\[A_1^d= \left ( \begin{array}{ccc}  A_1^{d-2} & \vline & 0 \\
\hline
 0 & \vline &
\begin{array}{ll} 1 & 0 \\ 0 & 1 \end{array}  \end{array} \right ), \quad
  A_2^d= \left ( \begin{array}{ccc} A_2^{d-2}   & \vline  & 0 \\

  \hline
  0 & \vline &
 \begin{array}{ll} 0 & -1 \\ 1 & -1 \end{array}  \end{array} \right ) , \]

\vskip 3mm \[A_3^d= \left ( \begin{array}{ccc}  A_3^{d-2} & \vline &0 \\ \hline  0
&\vline &
\begin{array}{ll} -1 & 0 \\ -1 & 1 \end{array}  \end{array} \right ) ,\]

\vskip 3mm \noindent
where the $A_i^{d-2}$, $1 \leq i \leq 3$, are the matrices having
as the $j$-th row the coordinates of the $j$-vector of
$\sigma_i^{d-2}$ expressed in the basis $e_0, \cdots, e_{d-3}$.
Their inverses are given by $B_1^d=A_1^d$,

\[B_2^d= \left ( \begin{array}{ccc}  B_2^{d-2} & \vline & 0 \\ \hline  0
&\vline &
\begin{array}{ll} -1 & 0 \\ -1 & 1 \end{array}  \end{array} \right ) \text{ and
}\quad B_3^d= \left ( \begin{array}{ccc}  B_3^{d-2} & \vline & 0
\\ \hline 0 & \vline &
\begin{array}{ll} -1 & 1 \\ -1 & 0 \end{array}  \end{array} \right ) \]

\vskip 3mm \noindent where the matrix $B_i^{d-2}$, $1 \leq i \leq 3$, is the inverse of
the matrix $A_i^{d-2}$.

 Fix the index $l=1$ corresponding to the cone $\sigma_1^d$. By (\ref{Cij}),
\[C_{1i}^d=(B_i^d)^{-1} B_1^d=A_i^d \]
and, as we pointed out before, if $\{U_{\sigma_j}, X_j^{u_j}
\}_{\sigma_j \in \Sigma_d}$ represents the zero divisor then for
any pair $i$, $j$,
\[u_{1i}=u_i-C_{1i}u_1=0. \]
Hence, for any $v^d \in P_p^d$, $h_i^{v^d}:=h_{1ip}^{u_{1i}}(v^d)$
is defined by the relation
\[ A_i^d.v^d=p.h_i^{v^d}+r_{1ip}(v^d) \]
for a unique $r_{1ip}(v^d) \in P_p^d$.

For any $v^d=(a_0, \cdots,a_{d-3},a_{d-2}, a_{d-1}) \in P_p^d$, we
define
\[ \begin{array}{l} d_1^d:=  A_1^d.v^d ,\\
d_2^d:=  A_2^d.v^d ,\\ d_3^d:=  A_3^d.v^d
.\end{array}\]
Notice that we can see $v^d$ as $v^d=(v^{d-2},a_{d-2}, a_{d-1})$
and hence we have
\[ \begin{array}{l} d_1^d=(d_1^{d-2}, a_{d-2},a_{d-1}), \\
d_2^d=(d_2^{d-2}, -a_{d-2},-a_{d-2}+a_{d-1}), \\
d_3^d=(d_3^{d-2}, -a_{d-1},a_{d-2}-a_{d-1}). \end{array} \]

\vspace{3mm}

 Notice that if $v^d \in P_p^d$, then $d_1^d \in P_p^d$. Hence,
we have $h_1^{v^d}=0$ and $l_{\sigma_1^d}=0$. Therefore, it only
remains to determine, for any $v^d \in P_p^d$, the functions
$l_{\sigma_2^d}$ and $l_{\sigma_3^d}$ and to write down the
corresponding $\cO_{X_d}(D_{v^d})$. To this end, we will proceed
by induction on odd $d$ and  we will consider four different
cases.

\vskip 2mm \noindent {\bf Case 1:} $a_{d-2}=a_{d-1}=0$.

In that case, for $i=2,3$ we have
\[ h_i^{v^d}=(h_i^{v^{d-2}},0,0).\]
Therefore, $l_{\sigma_3^d}=l_{\sigma_3^{d-2}}$, $l_{\sigma_3^d}=l_{\sigma_3^{d-2}}$ and using induction
the different summands that we get are \[
 \cT_3 \otimes \bigotimes_{k=2}^{l-1}\Big (\cO
\oplus \cO(Z_{2k}^-+D_k^+) \oplus \cO(Z_{2k-1}^-+D_k^-) \oplus
\cO(Z_{2k-1}^-+Z_{2k}^-)\]\[ \oplus \cO(Z_{2k-1}^-+Z_{2k}^-+D_k^-)
\oplus \cO(Z_{2k-1}^-+Z_{2k}^-+D_k^+)\Big ) \otimes \cO.
\]

\vskip 2mm \noindent {\bf Case 2:} $a_{d-2} \neq 0$ and $
a_{d-1}=0$.

In that case we have
\[ h_2^{v^d}=(h_2^{v^{d-2}},-1,-1), \text{ and }\]
\[ h_3^{v^d}=(h_3^{v^{d-2}},0,0).\]
Therefore, $l_{\sigma_2^d}=l_{\sigma_2^{d-2}}+\hat{e}_{d-2}$ and
$l_{\sigma_3^d}=l_{\sigma_3^{d-2}}$. Thus, using induction, the
different summands that we get in this case are \[
 \cT_3 \otimes \bigotimes_{k=2}^{l-1}\Big (\cO
\oplus \cO(Z_{2k}^-+D_k^+) \oplus \cO(Z_{2k-1}^-+D_k^-) \oplus
\cO(Z_{2k-1}^-+Z_{2k}^-) \]\[\oplus \cO(Z_{2k-1}^-+Z_{2k}^-+D_k^-)
\oplus \cO(Z_{2k-1}^-+Z_{2k}^-+D_k^+)\Big ) \otimes
\cO(Z_{2l-1}^-+D_l^-). \]

\vskip 2mm \noindent {\bf Case 3:} $a_{d-1} \neq 0$ and $
a_{d-2}=0$.

In that case we have
\[ h_2^{v^d}=(h_2^{v^{d-2}},0,0) \text{ and }\]
\[ h_3^{v^d}=(h_3^{v^{d-2}},-1,-1).\]
Therefore, $l_{\sigma_2^d}=l_{\sigma_2^{d-2}}$ and
$l_{\sigma_3^d}=l_{\sigma_3^{d-2}}+\hat{e}_{d-1}$. Thus, using
induction, the different summands that we get in this case are
\[
 \cT_3 \otimes \bigotimes_{k=2}^{l-1}\Big (\cO
\oplus \cO(Z_{2k}^-+D_k^+) \oplus \cO(Z_{2k-1}^-+D_k^-) \oplus
\cO(Z_{2k-1}^-+Z_{2k}^-) \]\[\oplus \cO(Z_{2k-1}^-+Z_{2k}^-+D_k^-)
\oplus \cO(Z_{2k-1}^-+Z_{2k}^-+D_k^+)\Big ) \otimes
\cO(Z_{2l}^-+D_l^+) .\]

\vskip 2mm \noindent {\bf Case 4:} $a_{d-1} \neq 0$ and $ a_{d-2}
\neq 0$.

In that case the following possibilities can occur:
\[ \begin{array}{llll} 4.1) & h_2^{v^d}=(h_2^{v^{d-2}},-1,-1) &
\mbox{and} &  h_3^{v^d}=(h_3^{v^{d-2}},-1,0); \\
4.2) & h_2^{v^d}=(h_2^{v^{d-2}},-1,0) &
\mbox{and} &  h_3^{v^d}=(h_3^{v^{d-2}},-1,0); \\
4.3) & h_2^{v^d}=(h_2^{v^{d-2}},-1,0) &
\mbox{and} &   h_3^{v^d}=(h_3^{v^{d-2}},-1,-1). \\
\end{array} \]
If $4.1)$ occurs, then $l_{\sigma_2^d}=l_{\sigma_2^{d-2}}+
\hat{e}_{d-2}$ and
$l_{\sigma_3^d}=l_{\sigma_3^{d-2}}+\hat{e}_{d-2}+\hat{e}_{d-1}$
and the different summands that we get in this case are
\[\cT_3\otimes
\bigotimes_{k=2}^{l-1}(\cO
\oplus \cO(Z_{2k}^-+D_k^+) \oplus \cO(Z_{2k-1}^-+D_k^-) \oplus
\cO(Z_{2k-1}^-+Z_{2k}^-) \oplus \] $$\cO(Z_{2k-1}^-+Z_{2k}^-+D_k^-)
\oplus \cO(Z_{2k-1}^-+Z_{2k}^-+D_k^+)\Big )  \otimes
\cO(Z_{2l-1}^-+Z_{2l}^-+D_l^-)   .$$

If $4.2)$ occurs, then $l_{\sigma_2^d}=l_{\sigma_2^{d-2}}+
\hat{e}_{d-2}+\hat{e}_{d-1}$ and
$l_{\sigma_3^d}=l_{\sigma_3^{d-2}}+\hat{e}_{d-2}+\hat{e}_{d-1}$
and the different summands that we get in this case are
\[
 \cT_3 \otimes \bigotimes_{k=2}^{l-1}\Big (\cO
\oplus \cO(Z_{2k}^-+D_k^+) \oplus \cO(Z_{2k-1}^-+D_k^-) \oplus
\cO(Z_{2k-1}^-+Z_{2k}^-) \oplus \]\[ \cO(Z_{2k-1}^-+Z_{2k}^-+D_k^-)
\oplus \cO(Z_{2k-1}^-+Z_{2k}^-+D_k^+)\Big ) \otimes
\cO(Z_{2l}^-+Z_{2l-1}^-).\]

Finally, if $4.3)$ occurs, then
$l_{\sigma_2^d}=l_{\sigma_2^{d-2}}+ \hat{e}_{d-2}+\hat{e}_{d-1}$
and $l_{\sigma_3^d}=l_{\sigma_3^{d-2}}+\hat{e}_{d-1}$ and the
different summands that we get in this case are \[
 \cT_3 \otimes  \bigotimes_{k=2}^{l-1}\Big (\cO
\oplus \cO(Z_{2k}^-+D_k^+) \oplus \cO(Z_{2k-1}^-+D_k^-) \oplus
\cO(Z_{2k-1}^-+Z_{2k}^-) \oplus\]\[  \cO(Z_{2k-1}^-+Z_{2k}^-+D_k^-)
\oplus \cO(Z_{2k-1}^-+Z_{2k}^-+D_k^+)\Big )  \otimes
\cO(Z_{2l}^-+Z_{2l-1}^-+D_l^+).  \]

\vspace{3mm}
 Putting together
the four cases we obtain that the different summands appearing in the splitting
(\ref{split}) are precisely \[
 \cT_3 \otimes   \bigotimes_{k=2}^{l}\Big (\cO
\oplus \cO(Z_{2k}^-+D_k^+) \oplus \cO(Z_{2k-1}^-+D_k^-) \oplus
\cO(Z_{2k-1}^-+Z_{2k}^-) \]\[ \oplus \cO(Z_{2k-1}^-+Z_{2k}^-+D_k^-)
\oplus \cO(Z_{2k-1}^-+Z_{2k}^-+D_k^+)\Big )
\]
which proves what we want.
\end{proof}



\section{Orthogonal basis}

This section contains the main theorem of this work and it has as
a main goal to construct a full strongly exceptional collection of
line bundles in the derived category $D^b(X)$, that is an
orthogonal basis made up of line bundles, where $X$ is a smooth
Fano toric variety with (almost) maximal Picard number.  We will
start recalling the notions of exceptional sheaves, exceptional
collections of sheaves, strongly exceptional collections of
sheaves and full strongly exceptional collections of sheaves as
well as the facts on derived categories needed in the rest of the
paper.

\begin{definition}\label{exceptcoll}
Let $Y$ be a smooth projective variety.

(i) A  coherent sheaf $F$ on  $Y$ is {\em exceptional} if $\Hom
(F,F)=K $ and $\Ext^{i}_{\cO _Y}(F,F)=0$ for $i>0$,

(ii) An ordered collection $(F_0,F_1,\ldots ,F_m)$  of coherent
sheaves on $Y$ is an {\em exceptional collection} if each sheaf
$F_{i}$ is exceptional and $\Ext^i_{\cO _Y}(F_{k},F_{j})=0$ for
$j<k$ and $i \geq 0$.

(iii) An exceptional collection $(F_0,F_1,\ldots ,F_m)$ is a {\em
strongly exceptional collection} if in addition $\Ext^{i}_{\cO
_Y}(F_j,F_k)=0$ for $i\ge 1$ and  $j \leq k$.

(iv) An ordered collection $(F_0,F_1,\cdots ,F_m)$ of coherent
sheaves on $Y$ is a {\em full (strongly) exceptional collection}
if it is a (strongly) exceptional collection  and $F_0$, $F_1$,
$\cdots $ , $F_m$ generate the bounded derived category $D^b(Y)$.
\end{definition}

\begin{remark} \label{length} The existence of a full strongly
exceptional collection $(F_0,F_1,\cdots,F_m)$ of coherent sheaves
on a smooth projective variety $Y$ imposes a rather  strong
restriction on $Y$, namely that the Grothendieck group
$K_0(Y)=K_0(\cO _Y-mod)$ is isomorphic to $\ZZ^{m+1}$.
\end{remark}

\vspace{3mm}

It is natural to ask whether $D^b(Y)$ is freely and finitely
generated. More precisely, we are lead to consider the following
problem

\begin{problem}\label{prob1}
To characterize smooth projective varieties $Y$ which have a full
strongly exceptional collection of coherent sheaves and, even
more, if there is one made up of line bundles.
\end{problem}

This problem is far from being solved and in this paper we will
restrict our attention to the particular case of toric varieties.
Toric varieties admit a combinatorial description which allows
many invariants to be expressed
 in terms of combinatorial data. We will use this fact
 to describe the derived category of
smooth Fano toric varieties with (almost) maximal Picard number
and, in particular, we will give positive contributions to the
above problem and to the following Conjecture:

 \begin{conjecture} \label{conj11} Every smooth complete
 Fano toric  variety $X$ has a full strongly exceptional
collection of line bundles.
\end{conjecture}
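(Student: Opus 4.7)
The plan is to extend the Frobenius-splitting strategy carried out above for the (almost) maximal Picard number case to a uniform attack on an arbitrary smooth complete Fano toric variety $X$ of dimension $n$. First, for a sufficiently large prime $p$, I would apply Thomsen's algorithm as recalled in Section~2 to compute the splitting $(\pi_p)_*(\cO_X)^{\vee}=\bigoplus_{v\in P_p^n}\cO_X(D_v)$ explicitly, and extract the finite set $\mathcal{S}(X)$ of pairwise non-isomorphic line bundle summands. The heuristic (due to Bondal) is that for Fano $X$ the set $\mathcal{S}(X)$ should, after an appropriate reordering, be a full strongly exceptional collection of line bundles on $X$.

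For fullness, I would argue that the summands of $(\pi_p)_*\cO_X$ generate $D^b(X)$: since $(\pi_p)_*\cO_X$ is a locally free sheaf of rank $p^n$ with first Chern class an ample multiple of $-K_X$ under the Fano assumption, one expects a tilting-type argument to show that the full additive closure of its summands generates $D^b(X)$. One then checks by a combinatorial count on the fan that the cardinality of $\mathcal{S}(X)$ equals the topological Euler characteristic $\chi(X)=\rank K_0(X)$, so the collection has the correct length demanded by Remark \ref{length}.

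The main obstacle is verifying strong exceptionality. Using the standard description of sheaf cohomology of toric line bundles in terms of lattice points of polyhedral chambers, each group $\Ext^i(\cO_X(D_v),\cO_X(D_w))=H^i(X,\cO_X(D_w-D_v))$ becomes an explicit combinatorial invariant of the fan $\Sigma(X)$. I would seek a uniform statement saying that every difference $D_w-D_v$ of Frobenius summands lies in a distinguished subregion of the Picard group (a ``Frobenius polytope'' intrinsic to $X$) on which higher cohomology vanishes; the Fano hypothesis, which ensures strict convexity of the anticanonical piecewise-linear function, is precisely what one needs to control these polyhedra. This is the delicate step, and is presumably where the general conjecture stands or falls, since without further structural input beyond the Fano condition it is not yet known how to rule out nontrivial higher $\Ext$ between arbitrary Frobenius summands on a general Fano fan.

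Finally, once the pairwise vanishings are in place, I would apply Bondal's criterion (Proposition \ref{Bondalcriteri}) to produce an admissible ordering of $\mathcal{S}(X)$, declaring $\cO_X(D_v)\prec\cO_X(D_w)$ whenever a suitable semigroup-theoretic relation on $D_w-D_v$ holds, matching the ordering strategy used in the (almost) maximal Picard number case treated here. The hard part, as indicated, is not the ordering but the uniform verification of the higher $\Ext$ vanishings for an arbitrary Fano fan; this is precisely why \ref{conj11} remains a conjecture, and progress toward it has so far proceeded family by family (projective spaces, splitting fans, small Picard number, and now the (almost) maximal Picard number case of Theorem \ref{mainthm}) rather than through a single combinatorial argument valid for all smooth complete Fano toric varieties.
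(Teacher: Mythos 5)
You have been asked about Conjecture \ref{conj11}, and the first thing to say is that the paper does not prove it: it is stated as a conjecture, and the body of the paper only establishes the special case of (almost) maximal Picard number (Theorem \ref{mainthm}), alongside the previously known cases recalled in the introduction. Your proposal, by your own admission, is not a proof either. The step you flag as ``delicate'' --- a uniform vanishing of $H^i(X,\cO_X(D_w-D_v))$ for $i>0$ over all pairs of Frobenius summands on an arbitrary smooth complete Fano fan --- is precisely the open content of the conjecture, and you offer no argument for it beyond the hope that a ``Frobenius polytope'' with the right acyclicity properties exists. A plan whose central step is ``this is presumably where the conjecture stands or falls'' is an accurate description of the state of the art, but it is not a proof, and it cannot be graded as one.

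Beyond the acknowledged gap, two of your intermediate claims would fail as stated. First, Proposition \ref{Bondalcriteri} is not an ordering device to be invoked after strong exceptionality has been verified separately; its hypothesis is a numerical condition on the intersection numbers $a_i^C$ of the toric divisors through a toric curve $C$, and under that hypothesis it already delivers the full strongly exceptional collection in one stroke. That curve condition is genuinely restrictive and fails for many smooth Fano toric varieties, so it cannot be the engine of a general proof; the paper uses it only after verifying the condition for the specific fans $\Sigma_d$. Second, the number of pairwise distinct summands of $(\pi_p)_*(\cO_X)^{\vee}$ need not equal $\rank K_0(X)$ for a general smooth toric variety (the count of distinct summands can exceed the rank of the Grothendieck group), so the ``combinatorial count'' you invoke for fullness is not automatic and must be checked family by family, as the paper does via (\ref{rankK0}) for $X_d$. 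In short, your text is a fair sketch of the strategy the paper applies to its special cases, but it neither proves the conjecture nor identifies a new route toward it.
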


So far, only partial results are known but there are some numerical evidences towards
Conjecture \ref{conj1} (For detailed information about Conjecture \ref{conj1}, the reader can consult \cite{CDMR}, \cite{CMZ}, \cite{CM} and the references quoted there).

\vspace{3mm} Let us start dealing with smooth Fano $d$-dimensional
toric varieties with Picard number $2d$ or $2d-1$ which are
products of toric varieties of smaller dimension. In that
case, we will use the following result:

\vspace{3mm}

\begin{proposition} \label{prod1} Let $X_1$ and $X_2$ be two smooth projective
varieties and let $(F_0^{i},F_1^{i},\ldots ,F_{n_{i}}^{i})$ be   a
full strongly exceptional collection of locally free sheaves on
$X_i$, $i=1,2$.  Then, $$(F_0^{1}\boxtimes
F_0^{2},F_1^{1}\boxtimes F_0^{2},\ldots ,F_{n_1}^{1}\boxtimes
F_0^{2},  \ldots , F_0^{1}\boxtimes F_{n_2}^{2},F_1^{1}\boxtimes
F_{n_2}^{2},\ldots ,F_{n_1}^{1}\boxtimes F_{n_2}^{2})$$ is a full
strongly exceptional collection of locally free sheaves on $X_1
\times X_2$.
\end{proposition}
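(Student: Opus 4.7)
The plan is to verify the three required properties of the collection in turn: exceptionality of each sheaf $F_i^1 \boxtimes F_j^2$, the vanishings required by the strongly exceptional condition under the given ordering, and fullness. Parts (a)–(b) will be immediate consequences of the K\"unneth formula applied to the input collections; part (c) is the geometric heart of the argument.

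First I would use the K\"unneth isomorphism
\[
\Ext^k_{X_1\times X_2}(F_{i_1}^1\boxtimes F_{j_1}^2,\,F_{i_2}^1\boxtimes F_{j_2}^2)\;\cong\;\bigoplus_{p+q=k}\Ext^p_{X_1}(F_{i_1}^1,F_{i_2}^1)\otimes_K\Ext^q_{X_2}(F_{j_1}^2,F_{j_2}^2),
\]
valid for locally free sheaves on smooth projective varieties. With the stated lexicographic order (second index outer, first index inner), one runs through the cases $(j_1>j_2)$, $(j_1=j_2,\,i_1>i_2)$, $(j_1<j_2)$, $(j_1=j_2,\,i_1<i_2)$, and $(i_1,j_1)=(i_2,j_2)$. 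In each case the vanishings demanded by Definition \ref{exceptcoll} for the product follow from the corresponding vanishings for the factors: any factor with $p\ge 1$ is killed by strong exceptionality of the $F_i^1$ whenever $i_1\le i_2$, or by plain exceptionality whenever $i_1>i_2$, and symmetrically for the second index; the diagonal case gives $K\otimes_K K=K$. This is mechanical bookkeeping.

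The substantive step is fullness. Let $\mathcal{T}$ denote the thick triangulated subcategory of $D^b(X_1\times X_2)$ generated by $\{F_i^1\boxtimes F_j^2\}_{i,j}$. I would argue in two stages. Fix $i$; the full subcategory
\[
\mathcal{S}_i:=\{\,B\in D^b(X_2)\;:\;F_i^1\boxtimes B\in\mathcal{T}\,\}
\]
is a thick triangulated subcategory of $D^b(X_2)$ (because $F_i^1\boxtimes(-)$ is exact and $\mathcal{T}$ is thick), and it contains every $F_j^2$, so by fullness of the second collection $\mathcal{S}_i=D^b(X_2)$. Next, for each $B\in D^b(X_2)$, the analogous subcategory $\{A\in D^b(X_1):A\boxtimes B\in\mathcal{T}\}$ is thick and contains every $F_i^1$, hence equals $D^b(X_1)$. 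Therefore $A\boxtimes B\in\mathcal{T}$ for all $A\in D^b(X_1)$, $B\in D^b(X_2)$. Finally, I would invoke the standard fact that objects of the form $A\boxtimes B$ generate $D^b(X_1\times X_2)$ as a thick triangulated category: pick ample line bundles $\mathcal{L}_i$ on $X_i$ and use that every coherent sheaf on $X_1\times X_2$ admits a finite resolution by direct sums of $(\mathcal{L}_1\boxtimes\mathcal{L}_2)^{\otimes -n}$'s via Serre's theorem applied to the ample line bundle $\mathcal{L}_1\boxtimes\mathcal{L}_2$, together with the fact that any coherent sheaf on $X_1\times X_2$ sits in short exact sequences whose outer terms are such direct sums; these resolvents are themselves box products and so lie in $\mathcal{T}$.

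The main obstacle is precisely this last generation step: the K\"unneth part is routine, but proving that box products generate $D^b(X_1\times X_2)$ requires an honest geometric input (ampleness of $\mathcal{L}_1\boxtimes\mathcal{L}_2$ and Serre vanishing), and one must be careful to organise the induction so that the resolving sheaves are all in $\mathcal{T}$. Once these three ingredients are in place, the proposition follows.
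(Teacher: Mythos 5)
The paper itself gives no argument for this proposition --- the ``proof'' is just a citation of \cite{CMZ}, Proposition 4.16 --- so your attempt has to be measured against the standard proof of that result. Your K\"unneth computation and the resulting case analysis for exceptionality and strong exceptionality under the stated ordering are correct, and this is exactly how the cited proof handles those parts. Your reduction of fullness is also sound as far as it goes: the two-stage bootstrap showing that the thick triangulated subcategory $\mathcal{T}$ generated by the $F_i^1\boxtimes F_j^2$ contains \emph{every} external product $A\boxtimes B$, $A\in D^b(X_1)$, $B\in D^b(X_2)$, is clean and correct.

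The gap is in the final generation step. It is simply not true that every coherent sheaf on $X_1\times X_2$ admits a \emph{finite} resolution by direct sums of powers $(\mathcal{L}_1\boxtimes \mathcal{L}_2)^{\otimes -n}$: Serre's theorem produces a surjection from such a direct sum, hence an infinite resolution, and truncating after $\dim (X_1\times X_2)$ steps leaves a locally free kernel $K$ which is in general not an external product and not known to lie in $\mathcal{T}$. (Already on an elliptic curve a nontrivial degree-zero line bundle has no finite resolution by direct sums of powers of an ample line bundle, so the ``standard fact'' you invoke in the form stated is false.) Hence your induction does not close. Two standard repairs, either of which completes your argument: (i) since the collection is exceptional, the subcategory $\mathcal{T}$ it generates is admissible, so fullness is equivalent to $\mathcal{T}^{\perp}=0$; for $T\in\mathcal{T}^{\perp}$ the projection formula gives $\Hom^{\bullet}_{X_2}\bigl(F_j^2,\,Rp_{2*}(p_1^*(F_i^1)^{\vee}\otimes T)\bigr)=0$ for all $i,j$, fullness on $X_2$ forces $Rp_{2*}(p_1^*(F_i^1)^{\vee}\otimes T)=0$, and base change to the fibres of $p_2$ together with fullness on $X_1$ forces $T=0$; or (ii) keep your bootstrap, observe that $\mathcal{T}$ then contains every skyscraper $\cO_{(x_1,x_2)}=\cO_{x_1}\boxtimes\cO_{x_2}$, so any $T\in\mathcal{T}^{\perp}$ satisfies $\Hom^{\bullet}(\cO_{(x_1,x_2)},T)=0$ for all points and hence (by Serre duality and the support argument) $T=0$; again admissibility of $\mathcal{T}$ is what lets you conclude $\mathcal{T}=D^b(X_1\times X_2)$ from $\mathcal{T}^{\perp}=0$. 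In either form the ``honest geometric input'' is admissibility of exceptional collections plus the projection/base-change formalism, not Serre vanishing.
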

\begin{proof}
See \cite{CMZ}; Proposition 4.16
\end{proof}

Applying this result we get

\begin{proposition}
\label{casproducte} Let $X$ be a $d$-dimensional smooth Fano toric
variety which is isomorphic to either $(S_3)^{\frac{d}{2}}$  or
$S_2 \times (S_3)^{\frac{d-2}{2}}$ if $d$ is even, and  $\PP^1
\times (S_3)^{\frac{d-1}{2}}$ if $d$ is odd. Then, $X$ has a full
strongly exceptional collection made up of line bundles.
\end{proposition}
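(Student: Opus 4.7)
The plan is to reduce the statement to its three building blocks, namely $\PP^1$, $S_2$, and $S_3$, and then assemble the collection on $X$ by iterated application of Proposition \ref{prod1}. Since $X$ is by hypothesis a product of copies of these surfaces (and possibly $\PP^1$), once each factor is equipped with a full strongly exceptional collection of line bundles, Proposition \ref{prod1} produces one on the product; note that the exterior tensor product of line bundles is again a line bundle, so the collection built from the factors remains a collection of line bundles on $X$, which is exactly what we need.

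Next I would verify the base cases. For $\PP^1$, the pair $(\cO_{\PP^1},\cO_{\PP^1}(1))$ is a classical full strongly exceptional collection (Beilinson \cite{Be}). For $S_2$ and $S_3$, the blow-ups of $\PP^2$ at two and three torus-invariant points respectively, these are smooth complete toric surfaces whose fans are splitting fans (each is obtained from $\PP^2$ by iteratively blowing up torus-fixed points, which on the fan side produces star subdivisions along rays). Therefore \cite{CMZ}, Theorem 4.12, applies and furnishes an explicit full strongly exceptional collection of line bundles on each of $S_2$ and $S_3$.

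Given these ingredients, I would finish as follows. Write $X=Y_1\times\cdots\times Y_r$ where each $Y_i\in\{\PP^1,S_2,S_3\}$ according to the three cases $(S_3)^{d/2}$, $S_2\times(S_3)^{(d-2)/2}$, and $\PP^1\times(S_3)^{(d-1)/2}$. Applying Proposition \ref{prod1} to $Y_1$ and $Y_2$ yields a full strongly exceptional collection of line bundles on $Y_1\times Y_2$; inducting on $r$, combining the collection obtained on $Y_1\times\cdots\times Y_{r-1}$ with the one on $Y_r$ via Proposition \ref{prod1} produces a full strongly exceptional collection of line bundles on $X$.

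The only delicate point is the verification of the base cases $S_2$ and $S_3$, which however is entirely handled by invoking \cite{CMZ}, Theorem 4.12, so no genuine obstacle remains; the rest of the argument is a formal iteration of the Künneth-type result in Proposition \ref{prod1}.
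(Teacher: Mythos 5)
Your overall strategy---reduce to the factors $\PP^1$, $S_2$, $S_3$ and iterate Proposition \ref{prod1}---is exactly the paper's argument, and the $\PP^1$ base case and the induction on the number of factors are fine. The gap is in your justification of the base cases $S_2$ and $S_3$: you claim their fans are splitting fans because they arise from $\PP^2$ by star subdivisions at torus-fixed points, and then invoke \cite{CMZ}, Theorem 4.12. This is false. A splitting fan (in Batyrev's sense, the hypothesis of that theorem) is one in which any two distinct primitive collections are disjoint, and such fans correspond precisely to iterated projectivizations of direct sums of line bundles; performing a star subdivision does not preserve this property. Concretely, $S_2$ has $5$ rays arranged in a pentagon, so its primitive collections are the $5$ pairs of non-adjacent rays, which cannot be pairwise disjoint; $S_3$ has $6$ rays and $9$ primitive collections (e.g.\ $\{e_1,-e_1\}$ and $\{e_1,-e_1-e_2\}$ both contain $e_1$). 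Neither surface is an iterated toric projective bundle, so Theorem 4.12 of \cite{CMZ} simply does not apply to them.

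The statement you need for the base cases is nevertheless true, but it requires a separate argument: the paper cites \cite{CMZ}, Proposition 4.19, which establishes full strongly exceptional collections of line bundles on $S_2$ and $S_3$ directly (these are del Pezzo surfaces, not covered by the splitting-fan machinery). With that substitution your proof closes; as written, the key step rests on a false premise.
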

\begin{proof}
It is well known that $\PP^1$ has a full strongly exceptional
collection made up of line bundles. On the other hand, by
\cite{CMZ}; Proposition 4.19, $S_2$ and $S_3$ both have a full
strongly exceptional collection of line bundles. Thus, we can
conclude by applying reiteratively Proposition \ref{prod1}.
\end{proof}

\vspace{3mm}

Now we will deal with the remaining case of a $d$-dimensional
smooth Fano toric variety $X$ with Picard number $2d-1 \leq \rho_X
\leq 2d$, namely  $d$ will be odd and $X$  isomorphic to $X_d$: a
toric $(S_3)^{\frac{d-1}{2}}$-fiber bundle over $\PP^1$. In this
case, we will apply Bondal's criterium. Roughly speaking, this criterium
 asserts  that,
under certain restrictions, the different summands of the
splitting of the Frobenius direct image $(\pi_p)_*(\cO_{X_d})$
of the tautological line bundle can be ordered in such a way that they
 form a full strongly
exceptional collection of line bundles.
 We are going to recall it
after fixing some notation.

\vspace{3mm}

\begin{notation}
For any irreducible toric  curve $C$ in an $n$-dimensional toric
variety $X$, we denote by $D_1^C, \cdots, D_{n-1}^C$ the irreducible
toric divisors containing $C$ and we denote by $(a_1^C, \cdots,
a_{n-1}^C)$ the corresponding intersections numbers of the divisors
$D_i^C$ with $C$.
\end{notation}

\begin{proposition} {\bf (Bondal's criterium)}
\label{Bondalcriteri} Let $X$ be a smooth $n$-dimensional toric variety.
Assume that for any  irreducible toric  curve $C$ on $X$, the
coefficients $a_i^C$ verify $a_i^C \geq -1$ for $1 \leq i \leq
n-1$ and that no more than one is equal to $-1$. Then, for $p \gg
0$, a suitable order of  the different summands of $(\pi_p)_*(\cO_X)^{\vee}$ form a full
strongly exceptional collection of line bundles on $X$.
\end{proposition}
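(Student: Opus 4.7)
The plan is to combine Thomsen's explicit description of the summands of $(\pi_p)_*(\cO_X)^{\vee}$ with a toric cohomology vanishing theorem whose hypothesis is guaranteed by the assumption on the $a_i^C$. Writing
\[(\pi_p)_*(\cO_X)^{\vee} = \bigoplus_{v \in P_p^n} \cO_X(D_v)\]
by \cite{JF}, let $\mathcal{S}=\{L_1,\ldots,L_N\}$ denote the set of distinct isomorphism classes of summands. For fullness of the collection, I would use that $\cO_X$ itself appears as a summand (corresponding to $v=0$ in Thomsen's algorithm), that the remaining $L_\alpha$'s furnish sufficiently many twists, and a standard Beilinson/Bondal--Van den Bergh type argument to conclude that $\mathcal{S}$ split-generates $D^b(X)$.

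For the exceptional structure, the crucial observation is that, although the individual divisors $D_v$ become large as $p\to\infty$, their pairwise differences $D_v-D_w$ have coefficients on the toric divisors $Z_j$ bounded by a constant depending only on the fan $\Sigma$. Indeed, Remark \ref{explicacio} shows that each coefficient equals $-l_{\sigma_k}(v_j)$, and the differences $\beta_v^j-\beta_w^j$ take values in a fixed finite set independent of $p$. Consequently, checking $\Ext^i_X(L_\alpha,L_\beta)=H^i(X,\cO_X(D_\beta-D_\alpha))=0$ for $i\geq 1$ reduces to a cohomology vanishing statement for finitely many line bundles, uniformly in $p$.

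The heart of the argument is then a toric Kawamata--Viehweg-type vanishing: for a Cartier divisor $E$ on the smooth toric variety $X$, one has $H^i(X,\cO_X(E))=0$ for all $i\geq 1$ whenever $E\cdot C\geq -1$ for every irreducible toric curve $C$ and at most one such intersection equals $-1$. I would prove this via the combinatorial description of $H^i(X,\cO_X(E))$ as supports of characters (Demazure--Ishida), cone by cone, and then verify --- using the hypothesis on the $a_i^C$ together with the bounded nature of the differences above --- that every relevant $D_\beta-D_\alpha$ satisfies the required intersection inequalities against every toric curve.

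Finally, the ``suitable order'' is obtained by sorting the line bundles $L_\alpha$ by their degree with respect to a fixed ample class on $X$, which forces $H^0(X,L_\alpha\otimes L_\beta^{-1})=0$ whenever $L_\alpha$ strictly precedes $L_\beta$; together with the vanishing of higher cohomology in both directions, this yields $\Ext^i(L_k,L_j)=0$ for all $i\geq 0$ and $j<k$ as well as $\Ext^i(L_j,L_k)=0$ for $i\geq 1$ and $j\leq k$. The main obstacle is the toric vanishing theorem itself: showing that the \emph{pointwise} hypothesis on the $a_i^C$ is strong enough to yield global cohomology vanishing for all the difference divisors arising from Thomsen's algorithm demands a delicate cone-by-cone combinatorial argument, together with a careful control (via the explicit formulas of Remark \ref{explicacio}) of the coefficients produced by the algorithm.
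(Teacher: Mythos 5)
The paper does not actually prove this proposition: its ``proof'' is the single line ``See \cite{bondal}'', a pointer to Bondal's three-page Oberwolfach announcement, which itself states the criterion without detailed arguments. So there is no in-paper proof to compare you against; you are trying to supply an argument the authors use as a black box. Your high-level strategy --- the differences $D_v-D_w$ of the Thomsen summands have coefficients in a finite set independent of $p$, so strong exceptionality reduces to higher-cohomology vanishing for finitely many toric line bundles, and the ordering is then obtained by sorting by degree against an ample class --- is indeed the route taken in the places where this criterion is fleshed out, and those two pieces of your sketch are sound.

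There are, however, two genuine gaps. First, fullness: the ``standard Beilinson/Bondal--Van den Bergh type argument'' you invoke does not exist in the form you need. That the summands of $(\pi_p)_*(\cO_X)^{\vee}$ generate $D^b(X)$ for $p\gg 0$ is itself a nontrivial theorem (it is one of the main claims of Bondal's announcement); observing that $\cO_X$ occurs as a summand together with ``sufficiently many twists'' establishes nothing, and one needs something like a resolution of the diagonal or a Koszul/Cox-ring argument. Second, and more seriously, the vanishing step. The hypothesis of the proposition is a condition on the \emph{fan} of $X$: the $a_i^C$ are the coefficients of the wall relation $u_++u_-+\sum_i a_i^C u_i=0$, i.e., the intersection numbers with $C$ of the boundary divisors \emph{containing} $C$, and ``at most one equal to $-1$'' is imposed per curve among its $n-1$ coefficients. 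Your proposed lemma instead imposes conditions on $E\cdot C$ for an arbitrary Cartier divisor $E$, with ``at most one intersection equal to $-1$'' now apparently ranging over all curves --- a condition of a different nature, far too restrictive if read globally, and in any case neither proved nor derived from the actual hypothesis. The entire content of Bondal's criterion is exactly the implication from the wall-relation condition to acyclicity of all the difference divisors produced by Thomsen's algorithm (via the Demazure--Ishida description of $H^i$ in terms of the rays where the support function is negative); your sketch explicitly names this as ``the main obstacle'' and leaves it open, so the proof is not complete.
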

\begin{proof} See \cite{bondal}.
\end{proof}

\vspace{3mm}
\begin{remark}
\label{coef} Let $X$ be a smooth toric variety of dimension $d$ and let $C$ be an irreducible toric curve. Let us compute the numerical invariants  $a_{i}^C$. To this end,
we consider $u_1, \cdots, u_{n-1}$ the generators of the cone corresponding
to $C$ and let $u_+$, $u_-$ be the additional generators of the
two maximal  cones adjacent to it. Then, there is a
relation
\[u_+ + u_- + \sum_{i=1}^{n-1}a_i^Cu_i=0 \]
in which the coefficients $a_i^C$ are the required intersection
numbers.
\end{remark}

\vspace{3mm}

\begin{theorem}
\label{casBondal} Let $X_d$ be a toric
$(S_3)^{\frac{d-1}{2}}$-fiber bundle over $\PP^1$. Then,  a suitable order of the
summands of  {\footnotesize\[
 \cT_3 \otimes \bigotimes_{k=2}^{l}(\cO
\oplus \cO(Z_{2k}^-+D_k^+) \oplus \cO(Z_{2k-1}^-+D_k^-) \oplus
\cO(Z_{2k-1}^-+Z_{2k}^-) \oplus \cO(Z_{2k-1}^-+Z_{2k}^-+D_k^-)
\oplus \cO(Z_{2k-1}^-+Z_{2k}^-+D_k^+))
\]}
where {\footnotesize\[ \begin{array}{ll} \cT_3 \cong & \cO \oplus
\cO(D_0) \oplus \cO(Z_1^-+D_1^-) \oplus \cO(Z_1^-+D_1^-+D_0)
\oplus \cO(Z_2^-+D_1^+) \oplus \cO(Z_2^-+D_1^++D_0) \\ &  \oplus
\cO(Z_1^-+Z_2^-) \oplus \cO(Z_1^-+Z_2^-+D_1^+)  \oplus
\cO(Z_1^-+Z_2^-+D_1^-) \oplus \cO(Z_1^-+Z_2^-+D_0) \\ & \oplus
\cO(Z_1^-+Z_2^-+D_1^++D_0)  \oplus \cO(Z_1^-+Z_2^-+D_1^-+D_0)
\end{array} \] }
form a full strongly exceptional collection of line bundles on
$X_d$.
\end{theorem}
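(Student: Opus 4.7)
The plan is to apply Bondal's criterium (Proposition \ref{Bondalcriteri}) to $X_d$: since Proposition \ref{summandspliting} already enumerates the distinct summands of $(\pi_p)_*(\cO_{X_d})^{\vee}$, the theorem will follow once we verify the numerical hypothesis of that criterium, namely that for every irreducible toric curve $C \subset X_d$ the intersection numbers $a_i^C$ of the $d-1$ toric divisors containing $C$ satisfy $a_i^C \geq -1$ with at most one equal to $-1$.

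By Remark \ref{coef}, these coefficients are read off from the wall relation $u_+ + u_- + \sum_{i=1}^{d-1} a_i^C u_i = 0$, where the $u_i$ generate the $(d-1)$-dimensional cone $\tau$ dual to $C$ and $u_\pm$ are the additional generators of the two maximal cones of $\Sigma_d$ adjacent to $\tau$. The pair $\{u_+, u_-\}$ is necessarily a primitive collection of size two, hence appears in the list (\ref{primitive}). A direct computation of all seven families of primitive relations gives
\begin{align*}
 v_{2k-1}+v_{2k} &= 0, & w_{2j-1}+w_{2j} &= 0,\\
 w_{2j-1}+v_{4j-2} &= v_{4j}, & w_{2j-1}+v_{4j-1} &= v_{4j-3},\\
 w_{2j}+v_{4j-3} &= v_{4j-1}, & w_{2j}+v_{4j} &= v_{4j-2},\\
 w_0+v_0 &= v_1,
\end{align*}
so in each case $u_+ + u_-$ equals either $0$ or a single ray with coefficient $+1$. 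Accordingly the wall relation has either all $a_i^C$ equal to $0$ (antipodal pairs) or exactly one $a_i^C$ equal to $-1$ with all other $a_i^C$ equal to $0$ (diagonal pairs), and Bondal's hypothesis is trivially satisfied on every toric curve of $X_d$.

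The genuine obstacle is the combinatorial matching between walls of $\Sigma_d$ and primitive collections: for each wall $\tau$ one must check that the single ray $u''$ appearing on the right-hand side of the primitive relation $u_+ + u_- = u''$ actually belongs to $\tau$, so that $u''$ really contributes a coefficient $-1$ to the wall relation rather than forcing a different expansion in the basis of rays of $\tau$. I plan to handle this by induction on $l$ using the $(S_3)^l$-fiber bundle structure of $X_d$ over $\PP^1$: the walls of $\Sigma_d$ fall into three classes — walls lifted from the fiber $S_3$, walls pulled back from the base $\PP^1$, and mixed walls reflecting the bundle twist — and each class is matched explicitly with one of the primitive relations above, exploiting the recursive presentation $\sigma_i^d = \langle \sigma_i^{d-2}, \cdot, \cdot\rangle$ of the maximal cones used in the proof of Proposition \ref{summandspliting}. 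Once this combinatorial verification is in place, Proposition \ref{Bondalcriteri} delivers the claimed full strongly exceptional collection of line bundles and completes the proof.
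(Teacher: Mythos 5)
Your overall strategy (combine Proposition \ref{summandspliting} with Bondal's criterium, Proposition \ref{Bondalcriteri}, and verify the wall relations via Remark \ref{coef}) is the same as the paper's, but the verification itself has a genuine gap. The pivot of your argument is the assertion that for every wall $\tau$ the pair $\{u_+,u_-\}$ of opposite generators is necessarily a primitive collection, so that $u_++u_-$ can simply be read off from the list (\ref{primitive}). This is not a general fact about smooth complete toric varieties: already for $\PP^2$ the wall $\langle e_1\rangle$ has $u_+=e_2$ and $u_-=-e_1-e_2$, and these two rays span a maximal cone. So for $X_d$ the assertion is itself a nontrivial combinatorial statement whose proof requires essentially the wall-by-wall analysis you are trying to bypass. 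Moreover, even granting it, your enumeration is incomplete: the list (\ref{primitive}) omits the primitive collections $\{v_{4j-3},v_{4j}\}$ and $\{v_{4j-2},v_{4j-1}\}$ (for $d=3$ these are $\{e_1,-e_2\}$ and $\{-e_1,e_2\}$, with relations $v_1+v_4=w_1$ and $v_2+v_3=w_2$), and these pairs genuinely occur as $\{u_+,u_-\}$ --- for instance for the wall $\langle v_0,w_1\rangle$ of $\Sigma_3$, whose two adjacent maximal cones are $\langle v_0,w_1,v_1\rangle$ and $\langle v_0,w_1,v_4\rangle$. Your seven families of relations therefore do not cover all walls, even in the base case $d=3$.

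Second, you correctly isolate the real difficulty --- when $u_++u_-=u''$ is a single ray one must show that $u''$ actually lies in $\tau$, since otherwise $u''$ has to be re-expanded in the rays of $\tau$ and the coefficients $a_i^C$ are no longer under control --- but you only announce a plan for it (``induction on $l$ using the fiber bundle structure'') without carrying it out. That matching is precisely where the content of the proof lives. The paper executes it by induction on odd $d$: the base case $d=3$ is an explicit table of all eighteen walls with their adjacent generators, and the inductive step isolates, in each maximal cone adjacent to $\tau$, the two generators lying in $S=\{\pm e_{d-2},\pm e_{d-1},\pm(e_{d-2}-e_{d-1})\}$, shows via (\ref{parelles}) that only six pairs can occur, and then splits into the cases $\{z_1,z_2\}=\{z_1',z_2'\}$ (reduce to $X_{d-2}$) and $\{z_1,z_2\}\cap\{z_1',z_2'\}$ a single vector (direct cancellation argument). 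Until you supply an argument of this kind, your proposal establishes the strategy but not the theorem.
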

\begin{proof} First of all notice that we have exactly $2 \cdot
6^{\frac{d-1}{2}}$ summands which by (\ref{rankK0}) is the rank of
the Grothendieck group $K_0(X_d)$. Hence, the cardinality of any full strongly exceptional collection on $X_d$ is $2 \cdot
6^{\frac{d-1}{2}}$ (see Remark \ref{length}).

\vspace{3mm}

By Theorem \ref{summandspliting} and Proposition
\ref{Bondalcriteri}, we will conclude if we prove that $X_d$
verifies Bondal's criterium. To this end, let $e_0, \ldots,
e_{d-1}$ be a $\ZZ$-basis of the lattice $\ZZ^d$ and  denote by
 \begin{equation} \label{base1} v_0=e_0,  \quad v_{2k-1}=e_k, \quad  v_{2k}=-e_k, \quad \mbox{for } 1 \leq k \leq d-1=2l
 \end{equation}
 \begin{equation} \label{base2} w_0=e_1-e_0,  \quad w_{2j-1}=e_{2j-1}-e_{2j}, \quad  w_{2j}=e_{2j}-e_{2j-1}, \quad \mbox{for } 1 \leq j \leq l,
 \end{equation}
 the ray generators of the fan $\Sigma_d$ associated to $X_d$. We will proceed by induction on odd $d$.

Let $d=3$. In the following table, we write down the
two-dimensional cones $\sigma_C$ associated to any
irreducible toric curve $C\subset X_d$ and the
 additional generators $u_+$, $u_ -$ of the
two $3$-dimensional cones adjacent to it.

\vspace{5mm}

\begin{center}

\begin{tabular}{|c|c|c|c|c|c|c|c|c|}
 \cline{1-4} \cline{6-9}
   & $\sigma_C$ & $u_+$ & $u_ -$  &  \hspace{15mm} &  & $\sigma_C$ & $u_+$ & $u_ -$   \\
  \cline{1-4} \cline{6-9}
  1 & $\langle v_1,v_3 \rangle $ & $v_0$ & $w_0$ & \hspace{15mm} &  10 & $\langle v_3, w_0 \rangle $ & $v_1$ & $w_2$ \\
  \cline{1-4} \cline{6-9}
  2 & $\langle v_1,  v_0 \rangle $ &$ v_3$ & $w_1$ & \hspace{15mm} &  11 & $\langle v_3,w_2 \rangle $ & $v_0$ & $w_0$\\
  \cline{1-4} \cline{6-9}
  3 & $\langle v_1,w_0 \rangle $ & $v_3$ & $w_1$ & \hspace{15mm} &  12 & $\langle v_4,v_0 \rangle $ & $v_2$ & $w_1$ \\
  \cline{1-4} \cline{6-9}
  4 & $\langle v_1,w_1 \rangle $& $v_0$ & $w_0$ & \hspace{15mm} &  13 & $\langle v_4,w_0 \rangle $ & $v_2$ & $w_1$\\
  \cline{1-4} \cline{6-9}
  5 & $\langle v_2,v_4 \rangle $ & $v_0$ & $w_0$ & \hspace{15mm} &  14 & $\langle v_4,w_1 \rangle $ & $v_0$ & $w_0$\\
  \cline{1-4} \cline{6-9}
  6 & $ \langle v_2,v_0 \rangle $ & $v_4$ & $w_2$ & \hspace{15mm} &  15 & $\langle v_0,w_1 \rangle $ & $v_1$ & $v_4$\\
  \cline{1-4} \cline{6-9}
  7 & $ \langle v_2,w_0 \rangle $ & $v_4$ & $w_2$ & \hspace{15mm} &  16 & $\langle v_0,w_2 \rangle $ & $v_2$ & $v_3$\\
  \cline{1-4} \cline{6-9}
   8 & $ \langle v_2,w_2 \rangle $ & $v_0$ & $w_0$ & \hspace{15mm} &  17 & $\langle w_1,w_0 \rangle $ & $v_1$ & $v_4$\\
  \cline{1-4} \cline{6-9}
  9 & $ \langle v_3,w_0 \rangle $ & $v_1$ & $w_2$ & \hspace{15mm} &  18 & $\langle w_2,w_0 \rangle $ & $v_2$ & $v_3$\\
  \cline{1-4} \cline{6-9}
\end{tabular}
\end{center}

\vskip 4mm

By Remark \ref{coef}, we have to check that for any
irreducible toric curve $C$ in the above table the coefficients of the
relation
\begin{equation} \label{equ} u_+ + u_- +
a_1^Cu_1+a_2^Cu_2=0 \end{equation} being $u_1,u_2$  the ray
generators of $\sigma_C$,  are greater or equal to $-1$ and
at most there is one equal to $-1$.

 Consider the first case. If
 \[  v_0  + w_0 +a_1^Cv_1 +a_2^Cv_3 = 0 \]
 then by (\ref{base1}) and (\ref{base2}), we must have $a_1^C=-1$ and
 $a_2^C=0$. Hence the condition is verified. The remaining cases
 can also be checked by direct computation, and we left the details to the reader.

\vspace{3mm}

Let $d>3$ be an odd integer. Let $C$ be any  irreducible toric
curve, denote by $u_1, \cdots, u_{n-1}$  the generators of the
cone $\sigma_C$ corresponding to $C$ and let $u_+$, $u_-$ be the additional
generators of the two maximal  cones adjacent to it. Then,
there is a unique relation \begin{equation} \label{equ} u_+ + u_- +
\sum_{i=1}^{n-1}a_i^Cu_i=0 \end{equation}  and by Remark
\ref{coef} we have to prove that all the coefficients $a_i^C$ of
this relation are greater or equal to $-1$ and  at most there
is one equal to $-1$.

First of all notice that $\sigma_+:= \langle u_+,u_1,\cdots,
u_{d-1} \rangle$ is a maximal cone in $\Sigma_d$. Thus, it
contains at least two vectors, $z_1, z_2$ belonging to the set
\begin{equation} S:= \{ \pm e_{d-2}, \pm e_{d-1}, \pm ( e_{d-2}-e_{d-1})
\}.\end{equation} But it follows from (\ref{primitive}) that this
set does not contain three vectors defining a 3-dimensional cone
in $\Sigma_d$. Thus, $\sigma_+$ contains exactly two vectors
$z_1$, $z_2$ belonging to the set $S$ and moreover, the only possibilities for the pair $(z_1,z_2)$
are
\begin{equation}
\label{parelles}
 \begin{array}{ll}
(e_{d-2},e_{d-1}),  & (-e_{d-2},-e_{d-1}), \\
(e_{d-2},e_{d-2}-e_{d-1}),  & (-e_{d-2},e_{d-1}-e_{d-2}), \\
(e_{d-1},-e_{d-2}+e_{d-1}),  & (-e_{d-1},e_{d-2}-e_{d-1}) \\
\end{array}\end{equation}
because,  by
(\ref{primitive}), any other pair is a primitive collection.
The same argument shows that $\sigma_-:= \langle u_-,u_1,\cdots,
u_{d-1} \rangle$ contains exactly two vectors $z_1', z_2' $
belonging to the set $S$ and the only possibilities for the pair
$(z_1',z_2')$ are the ones in (\ref{parelles}).

From the definition is clear that the sets $\{z_1,z_2 \}$ and
$\{z_1',z_2 '\}$ coincide or they have at least one vector in
common. Keeping in mind this remark we will distinguish two cases.

{\bf Case 1:} $\{z_1,z_2 \}=\{z_1',z_2 '\}$

In that case, necessarily $z_1,z_2 \in \{u_1, \cdots, u_{d-1} \}$.
Renumbering if necessary, we can assume that $z_1=u_{d-2}$ and
$z_2=u_{d-1}$. Since the relation $(\ref{equ})$ must be verified, $z_1$ has to be canceled against
$z_2$. But $(z_1,z_2)$ is one of
the pairs (\ref{parelles}). Therefore, the only possibility is
$a_{d-2}^C=a_{d-1}^C=0$ and the relation (\ref{equ}) turns to be
\[u_+ + u_- + \sum_{i=1}^{d-3}a_i^Cu_i=0 .\] Hence, by hypothesis
of induction, the coefficients $a_i^C$  are greater or equal to -1  and at most one is equal to -1.

{\bf Case 2:} $\{z_1,z_2 \}$ and $\{z_1',z_2 '\}$ have one vector
in common.

In that case, renumbering, if necessary, the only possibility is
$z_1=u_{d-1}=z_1'$, $z_2=u_+$ and $z_2'=u_ -$. Thus, we must have $z_2+z_2'+a_{d-1}^Cz_1=u_++u_-+ a_{d-1}^Cu_{d-1}=0$. Therefore,  the only possibility is
$a_{d-1}^C=0$, $1$ or $-1$ and we get the relation
\[ \sum_{i=1}^{d-2}a_i^Cu_i=0. \]
If in this relation there is one $u_i$ of type $u_i=e_j-e_{j-1}$,
then it should be canceled with $-e_j$ and $e_{j-1}$; or  $e_j$
and $-e_{j-1}$; or $e_j$ and $e_{j-1}$; or $-e_j$ and $-e_{j-1}$.
But according to the list of primitive collections
(\ref{primitive}), none of this possibilities can occur since none
of them define a 3-dimensional cone. So, the relation only
contains vectors $u_i$ of type $\pm e_i$ and thus, the only possibility is
$a_i^C=0$ for all $1 \leq i \leq d-2$.

Therefore, for any  irreducible toric  curve $C\subset X_d$, Bondal's condition
is verified and therefore, the collection can be ordered in such a way that we get a full
strongly exceptional collection of line bundles.
\end{proof}
 \vspace{3mm}

Summing up we get our main result:

\begin{theorem} \label{mainthm}
Let $X$ be a smooth Fano $d$-dimensional toric variety with Picard
number $\rho_X$ with $2d-1 \leq \rho_X \leq 2d$. Then, $X$ has a
full strongly exceptional collection of line bundles.
\end{theorem}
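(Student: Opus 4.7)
The plan is to reduce the statement to the classification already carried out in Proposition \ref{classificacioVarietat}, and then to invoke the corresponding constructions proven earlier in the paper case by case. Concretely, given such an $X$, either $d$ is even (so $\rho_X \in \{2d-1,2d\}$) or $d$ is odd (so $\rho_X = 2d-1$), and Proposition \ref{classificacioVarietat} tells us that $X$ is isomorphic to exactly one variety in the list
\[
(S_3)^{d/2},\quad S_2\times (S_3)^{(d-2)/2},\quad \PP^1\times (S_3)^{(d-1)/2},\quad X_d,
\]
where $X_d$ denotes the toric $(S_3)^{(d-1)/2}$-fiber bundle over $\PP^1$. So it suffices to produce a full strongly exceptional collection of line bundles on each of these four kinds of varieties.

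For the first three varieties I would simply invoke Proposition \ref{casproducte}: they are all products of copies of $\PP^1$, $S_2$ and $S_3$, each of which has a known full strongly exceptional collection of line bundles (the standard Beilinson-type collection on $\PP^1$, and the collections on $S_2$ and $S_3$ from \cite{CMZ}, Proposition 4.19). Applying Proposition \ref{prod1} iteratively, the box product of these collections is a full strongly exceptional collection of line bundles on the product variety. This handles three of the four cases with essentially no work beyond citing the relevant earlier propositions.

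The remaining, substantive case is $X = X_d$. Here the work has already been done: Proposition \ref{summandspliting} identifies, for $p\gg 0$ prime, the line bundle summands of the Frobenius pushforward $(\pi_p)_*(\cO_{X_d})^\vee$, the count of which is $2\cdot 6^{(d-1)/2} = \rank K_0(X_d)$ by equation (\ref{rankK0}), so this collection has the right cardinality (cf.\ Remark \ref{length}). Then Theorem \ref{casBondal} shows, by verifying Bondal's numerical criterium (Proposition \ref{Bondalcriteri}) on every irreducible toric curve $C\subset X_d$, that these summands can be ordered to form a full strongly exceptional collection of line bundles on $X_d$.

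Putting all four cases together exhausts the classification and yields the theorem. Since every ingredient has already been established earlier in the paper, no additional obstacle arises at this stage; the only work is the bookkeeping of matching each isomorphism class in Proposition \ref{classificacioVarietat} to the relevant existence result (Proposition \ref{casproducte} or Theorem \ref{casBondal}).
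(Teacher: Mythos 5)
Your proposal matches the paper's own proof, which simply combines Proposition \ref{classificacioVarietat}, Proposition \ref{casproducte} and Theorem \ref{casBondal} in exactly the case-by-case manner you describe. The argument is correct and no further comment is needed.
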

\begin{proof}
It follows from Proposition \ref{classificacioVarietat},
Proposition \ref{casproducte} and Proposition \ref{casBondal}.
\end{proof}

\vspace{3mm}

\end{document}